\theoremstyle{plain}
\newtheorem{thm}{Theorem}[section]
\newtheorem{prop}[thm]{Proposition}
\newtheorem{lem}[thm]{Lemma}
\newtheorem{cor}[thm]{Corollary}
\theoremstyle{definition}
\newtheorem{con}[thm]{Condition}
\theoremstyle{remark}
\title{Rellich's theorem for spherically symmetric repulsive Hamiltonians}
\author{K. Itakura}
\date{}
\begin{document}

\maketitle

\begin{abstract}

For spherically symmetric repulsive Hamiltonians
we prove Rellich's theorem, or identify the largest weighted space of Agmon-H\"ormander type
where the generalized eigenfunctions are absent.
The proof is intensively dependent on commutator arguments.
Our novelty here is a use of conjugate operator associated with some radial flow,
not with dilations and translations.
Our method is simple and elementary, and does not employ any advanced tools such as
the operational calculus or the Fourier analysis.

\end{abstract}


\section{Introduction}
For any fixed $\epsilon\in (0,2]$
we consider the repulsive Schr\"{o}dinger operator
$$
H = -\frac{1}{2}\Delta - |x|^{\epsilon} +q; \quad -\Delta = p_j\delta^{jk}p_k, \ p_j = -i\partial_{x_j},
$$
on the Hilbert space ${\mathcal H} = L^2({\mathbb R}^d)$. 
Here $\delta^{jk}$ is the Kronecker delta, and we use the Einstein summation convention (throughout the paper we will use this notation),
and $q$ is a real-valued function that may grow slightly slower than $|x|^\epsilon$. 
By the Faris-Lavine theorem (see \cite[I\hspace{-1pt}I]{rs}) 
the operator $H$ is essentially self-adjoint on $C^\infty_0(\mathbb R^d)$,
and we denote the self-adjoint extension by the same letter.

The purpose of this paper is to prove Rellich's theorem for $H$,
which asserts the absence of generalized eigenfunctions 
in a certain weighted space, the Agmon-H\"{o}rmander space.
The space is optimal in the sense that we can actually construct a generalized eigenfunction in any larger spaces.
For the proof we apply a new commutator argument with some {\it weight inside} invented recently by \cite{is}.
A feature of this argument is a choice of the conjugate operator $A$:
We choose $A$, in Section 2, to be a generator of some radial flow, not of dilations and translations.
The proof consists only of direct computations and estimations of commutators, 
and does not require any deep knowledge from functional analysis or Fourier analysis.

\subsection{Setting}
Choose $\chi \in C^{\infty}({\mathbb R})$ such that
\begin{equation}\label{chi}
\chi (t) =
\begin{cases}
1 & {\rm for} \ t \leq 1, \\
0 & {\rm for} \ t \geq 2,
\end{cases}
\quad \chi' \leq 0,
\end{equation}
and set $r \in C^{\infty}({\mathbb R}^d)$ as
\begin{equation}\label{r}
r(x) = \chi(|x|)+|x|\left( 1-\chi(|x|) \right).
\end{equation}

\begin{con} \label{con}
The perturbation $q$ is a real-valued function.
Moreover, there exists a splitting by real-valued functions:
$$q_0 = q_1 + q_2 ; \quad q_1 \in C^1({\mathbb R}^d), \ q_2 \in L^{\infty}({\mathbb R}^d),$$
such that for some $\rho ,C>0$ the following bounds hold globally on ${\mathbb R}^d$:
\begin{equation*}
|q_1| \leq Cr^{\epsilon-\rho}, \quad \nabla^rq_1 \leq Cr^{\epsilon-1-\rho}, \quad |q_2|\leq Cr^{\epsilon/2-1-\rho}.
\end{equation*}
\end{con}

\vspace{2mm}
We introduce the weighted Hilbert space ${\mathcal H}_s$ for $s \in {\mathbb R}$ by
$${\mathcal H}_s = r^{-s}{\mathcal H}.$$
We also denote the locally $L^2$-space by
$${\mathcal H}_{\rm loc} = L^2_{\rm loc}({\mathbb R}^d).$$
We consider $B_R = \{ r<R \}$ and the characteristic functions
\begin{equation*}
F_{\nu} =
F(B_{R_{\nu+1}}\setminus B_{R_{\nu}}),\quad R_{\nu} = 2^{\nu}, \ \nu \geq 0,
\end{equation*}
where $F(\Omega)$ denotes sharp characteristic function of a subset $\Omega \subseteq {\mathbb R}^d$.
Define the spaces ${\mathcal B}^*$and ${\mathcal B}^*_0$ by
\begin{equation*}
\begin{split}
{\mathcal B}^* &= \{ \psi \in {\mathcal H}_{\rm loc} \ | \ \|\psi\|_{{\mathcal B}^*} < \infty \}, \quad \ \|\psi\|_{{\mathcal B}^*} = \sup_{\nu \geq 0} R_{\nu}^{\epsilon/4-1/2}\| F_{\nu}\psi\|_{\mathcal H},
\\
\mathcal B^*_0&=\{\psi\in \mathcal B^*\ |\ \lim_{\nu \to \infty} R_{\nu}^{\epsilon/4-1/2}\| F_{\nu}\psi\|_{\mathcal H}
=0\},
\end{split}
\end{equation*}
respectively. 
We note that ${\mathcal B}_0^*$ coincide with the closure of $C_0^{\infty}({\mathbb R}^d)$ in ${\mathcal B}^*$.
If $0<\epsilon<2$, the following inclusion relations hold for any $s>1/2-\epsilon/4$:
\begin{align}\label{1720221503}
{\mathcal H} \subsetneq {\mathcal H}_{\epsilon/4-1/2} \subsetneq {\mathcal B}_0^* \subsetneq {\mathcal B}^* \subsetneq {\mathcal H}_{-s}.
\end{align}
Similarly, if $\epsilon=2$, the following inclusion relations hold for any $s>0$:
\begin{align}\label{1720221504}
{\mathcal H} \subsetneq {\mathcal B}_0^* \subsetneq {\mathcal B}^* \subsetneq {\mathcal H}_{-s}.
\end{align}

\subsection{Results}
Our main result is the absence of ${\mathcal B}_0^*$-eigenfunctions for any eigenvalues $\lambda \in {\mathbb R}$.

\begin{thm}\label{1702221514}
Suppose Condition \ref{con}, and let $\lambda \in {\mathbb R}$.
If a function $\phi \in \mathcal B_0^*$ satisfies that
$$
(H-\lambda)\phi =0,
$$
in the distributional sense, then $\phi =0$ in ${\mathbb R}^d$.
\end{thm}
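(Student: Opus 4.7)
The plan is to carry out a positive-commutator argument using a conjugate operator adapted to the radial repulsive flow, together with the ``weight inside'' device alluded to in the introduction. Since the classical motion generated by $-\tfrac12\Delta-|x|^\epsilon$ at energy $\lambda$ has radial velocity $p_r\sim r^{\epsilon/2}$, I would take
$$
A = \tfrac12\bigl(f(r)p_r + p_r f(r)\bigr),\qquad p_r = -i r^{-1}x^j\partial_j,
$$
with a radial weight $f\in C^\infty((0,\infty))$ that agrees with $r^{\epsilon/2}$ outside a neighbourhood of the origin and is smoothed there by means of the cutoff already built into $r(\cdot)$. This $A$ plays the role of a ``time'' variable for the asymptotic classical dynamics and is symmetric on $C_0^\infty(\mathbb R^d)$; it is neither the dilation nor a pure translation generator, which is the novelty stressed in the introduction.

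Following \cite{is}, I would insert a smooth real cutoff $\Theta=\Theta_R(r)$ between $H$ and $A$ and study the quadratic form $\langle\phi, i[H,\mathrm{Re}(\Theta A)]\phi\rangle$. The principal part of $i[-\tfrac12\Delta,A]$ produces $f'(r)p_r^2$ plus angular and curvature contributions, while $i[-|x|^\epsilon,A]=\epsilon f(r)r^{\epsilon-1}$. With $f\sim r^{\epsilon/2}$, the kinetic coefficient is $\sim r^{\epsilon/2-1}$ and the potential term is $\sim r^{3\epsilon/2-1}$, and these combine constructively after one uses the eigenvalue equation $H\phi=\lambda\phi$ to trade lower order pieces. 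The target estimate has the form
$$
\langle\phi, i[H,\mathrm{Re}(\Theta A)]\phi\rangle \;\geq\; c\!\int \Theta\, r^{\epsilon/2-1}\bigl(|p\phi|^2 + r^\epsilon|\phi|^2\bigr)\,dx \;-\; \text{errors},
$$
the errors involving $\nabla^r q_1$, $q_2$ and derivatives of $\Theta$, all dominated by $r^{-\rho}$ times the principal quantity thanks to Condition \ref{con}.

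Since $(H-\lambda)\phi=0$ in distribution, the left-hand side vanishes, after a standard approximation argument that smooths $\phi$ and uses the $\mathcal B_0^*$-hypothesis to kill boundary contributions at infinity. Choosing $\Theta_R$ supported on $\{r\geq R\}$ and letting $R\to\infty$ along the dyadic scale $R_\nu=2^\nu$, the decay condition $\lim_{\nu\to\infty} R_\nu^{\epsilon/4-1/2}\|F_\nu\phi\|_{\mathcal H}=0$ is exactly the rate that absorbs the $r^{\epsilon/2-1}\cdot r^\epsilon$ weight in the principal term, forcing it to vanish and hence $\phi\equiv 0$ on $\{r\geq R_0\}$ for some large $R_0$. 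A standard weak unique continuation theorem (Aronszajn--Cordes type), available because $H-\lambda$ is a real, second-order elliptic operator with locally bounded coefficients, then propagates the vanishing inward to yield $\phi\equiv 0$ on $\mathbb R^d$. The main obstacle is the commutator estimate: one must pick $f$ and the family $\Theta_R$ so that the positive principal form genuinely dominates all error and cutoff contributions globally, not merely asymptotically, and this is precisely where the sharpness of the exponents in Condition \ref{con} is used.
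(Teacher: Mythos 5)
Your overall philosophy (a positive commutator with a weight inserted, a radial conjugate operator, unique continuation at the end) is the right one, but two of your concrete choices break the argument. First, the normalization of the conjugate operator is wrong. You take $A=\tfrac12(f p_r+p_r f)$ with $f\sim r^{+\epsilon/2}$, whereas the paper takes $A=\mathrm{Re}\,(r^{-\epsilon/2}p^r)$, the generator of the flow of $\nabla f$ with $f=\tfrac1{1-\epsilon/2}r^{1-\epsilon/2}$ (resp.\ $\log r$ for $\epsilon=2$), i.e.\ the \emph{inverse} power. This is not cosmetic: in any such scheme one must keep the outer cutoff error explicitly, and with the paper's $A$ it scales as $R_n^{\epsilon/2-1}\|F_n\phi\|^2=\bigl(R_n^{\epsilon/4-1/2}\|F_n\phi\|\bigr)^2\to 0$, which is exactly the quantity the hypothesis $\phi\in\mathcal B_0^*$ controls (see \eqref{n-infty}). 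With your $A\sim r^{\epsilon/2}p_r$ the same boundary term scales as $R_n^{3\epsilon/2-1}\|F_n\phi\|^2=o(R_n^{\epsilon})$, which need not vanish; your appeal to a ``standard approximation argument that kills boundary contributions at infinity'' is precisely the heart of the matter, and at the $\mathcal B_0^*$ threshold there is no slack to spare --- the explicit $\mathcal B^*$-eigenfunction of Subsection 1.3 shows the exponents are sharp. (Your own scaling claim, that the $\mathcal B_0^*$ rate absorbs the weight $r^{\epsilon/2-1}\cdot r^{\epsilon}=r^{3\epsilon/2-1}$, is off by the factor $r^{\epsilon}$ for the same reason.)

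Second, even granting the positivity estimate, a single virial-type inequality evaluated on the eigenfunction yields only a weighted $L^2$ bound of the form $\int_{\{r\ge 2R\}}w|\phi|^2\le C_R(\text{terms near }r\sim R)$; it cannot by itself force $\phi\equiv0$ on $\{r\ge R_0\}$, so your final step is a genuine leap. The paper needs a two-stage mechanism: Proposition~\ref{prop} proves a priori super-exponential decay ($e^{\alpha r}\phi\in\mathcal B_0^*$ for every $\alpha\ge0$) via the commutator estimate of Lemma~\ref{lem} with the regularized exponential weight $\Theta=\chi_{m,n}e^{\theta}$, $\theta=2\alpha\int_0^r(1+s/R_\nu)^{-1-\delta}ds$, letting first $n\to\infty$ (this is where $\mathcal B_0^*$ enters) and then $\nu\to\infty$; Proposition~\ref{prop2} then runs a second commutator estimate with $\Theta=\chi_{m,n}e^{2\alpha r}$ whose constants are uniform in $\alpha>\alpha_0$, so that letting $\alpha\to\infty$ in \eqref{nu-infty2} produces a contradiction unless $\bar\chi_{m+2}\phi\equiv0$; only after that does unique continuation conclude $\phi\equiv0$. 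Your proposal omits the exponential-weight bootstrap entirely, and without it (or some substitute growth mechanism in a parameter) the vanishing outside a compact set does not follow. A minor further point: with $f\sim r^{\epsilon/2}$ the operator $A$ is not controlled by $\mathcal D(H)$ (the paper's Lemma on $\mathcal D(H)\subset\mathcal D(A)$ uses exactly the $r^{-\epsilon/2}$ weight to cancel the growth of $|x|^{\epsilon}$), though this is secondary since all forms are localized by the cutoffs.
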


By the inclusions \eqref{1720221503} and \eqref{1720221504}
we obviously have the following corollary.

\begin{cor}
The operator $H$ has no eigenvalues: $\sigma_{{\rm pp}}(H) = \emptyset$.
\end{cor}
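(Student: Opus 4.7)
The plan is to obtain the Corollary as an immediate consequence of Theorem \ref{1702221514} via the embeddings \eqref{1720221503}--\eqref{1720221504}. The strategy is a proof by contradiction: an eigenvalue produces a nonzero $L^2$-eigenfunction, the inclusion $\mathcal H \subset \mathcal B_0^*$ lifts it into the Agmon--H\"ormander space, and Theorem \ref{1702221514} then forces it to vanish.

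Concretely, I would assume some $\lambda \in \mathbb R$ lies in $\sigma_{\mathrm{pp}}(H)$, so that there exists $\phi \in \mathcal H \setminus \{0\}$ with $H\phi = \lambda \phi$. Since $\phi$ lies in the domain of the self-adjoint extension $H$, the identity $(H-\lambda)\phi = 0$ holds in particular in the distributional sense, matching the hypothesis of Theorem \ref{1702221514}. Now invoke the chain $\mathcal H \subsetneq \mathcal B_0^*$ from \eqref{1720221503} in the case $0 < \epsilon < 2$, and from \eqref{1720221504} in the case $\epsilon = 2$; either way $\phi \in \mathcal B_0^*$. Applying Theorem \ref{1702221514} then yields $\phi = 0$, contradicting the assumed nontriviality of the eigenfunction. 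Hence $\sigma_{\mathrm{pp}}(H) = \emptyset$.

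I do not expect any real obstacle: the whole substance sits in Theorem \ref{1702221514}, which is treated separately. The embedding $\mathcal H \hookrightarrow \mathcal B_0^*$ invoked here is elementary, and is already folded into the stated inclusions. For the record, for $f \in \mathcal H$ one has $R_\nu^{\epsilon/4-1/2}\|F_\nu f\|_{\mathcal H} \leq R_\nu^{\epsilon/4-1/2}\|f\|_{\mathcal H}$; when $0 < \epsilon < 2$ the prefactor already tends to zero, and when $\epsilon = 2$ the prefactor is $1$ but $\|F_\nu f\|_{\mathcal H} \to 0$ by dominated convergence (since the $F_\nu$ are supported on disjoint annuli escaping to infinity). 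In either case the $\mathcal B_0^*$-tail condition is satisfied, which is all that is needed to feed $\phi$ into Theorem \ref{1702221514}.
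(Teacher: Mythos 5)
Your proposal is correct and coincides with the paper's argument: the corollary is deduced directly from Theorem \ref{1702221514} via the inclusions \eqref{1720221503} and \eqref{1720221504}, since any $L^2$-eigenfunction lies in $\mathcal B_0^*$ and hence must vanish. Your extra verification of the embedding $\mathcal H \subset \mathcal B_0^*$ (prefactor decay for $0<\epsilon<2$, tail decay of $\|F_\nu\phi\|$ for $\epsilon=2$) is accurate but already contained in the stated inclusions.
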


As we will see in Subsection 1.3 below,
we can actually construct a ${\mathcal B}^*$-eigenfunction, 
and hence the function space $\mathcal B_0^*$ in Theorem~\ref{1702221514} is optimal.
Note that Theorem~\ref{1702221514} covers the one-dimensional Stark Hamiltonians,
and in this case the Airy function exactly provides a ${\mathcal B}^*$-eigenfunction.
As far as we know, there seem to be no results on Rellich's theorem for repulsive Hamiltonians so far, and our result is new.
To prove Theorem~\ref{1702221514} we apply a new commutator argument with some {\it weight inside}
from \cite{is}.
We are directly motivated by their result,
in which spectral properties of the Schr\"odinger operator on manifold with ends are studied.
However, they consider only potentials decaying at infinity.
In order to deal with repulsive potentials that diverge to $-\infty$ at infinity
we need to appropriately change a construction of the conjugate operator (see \eqref{A0}).

In case $\epsilon=0$, there has been an extensive amount of literature on 
eigenvalue problems (e.g. \cite{a, fh, fhh2o, ho, ij, is, iso}).
As for the case $\epsilon=2$, Ishida studied inverse scattering problem in \cite{ishi}.
Our setting excludes $\epsilon>2$, however, 
Matsumoto, Kakazu and Nagamine studied eigenvalue problems for $\epsilon>2$ in \cite{mkn}.
Skibsted studied the case where $H$ has an attractive potential in \cite{s},
whereas we considered the case where $H$ has a repulsive potential.
Skibsted showed Rellich's theorem, in \cite{s}, as a corollary of a uniqueness theorem of the outgoing solution at zero energy.
We also mention a recent result \cite{im} by Isozaki and Morioka 
that studies Rellich's theorem 
for discrete Schr\"{o}dinger operator.

In Subsection 1.3 below, we verify existence of a generalized eigenfunction in ${\mathcal B}^*$.
In Section 2, we introduce the conjugate operator $A$ and show that $A$ is the generator of a strongly continuous one-parameter unitary group of
some radial flow.
In addition, we introduce commutators with weight inside and discuss the properties.
In Section 3, we prove Theorem~\ref{1702221514}.
The proof consists of two ingredients that are typical in such a topic:
a priori super-exponential decay estimate and 
the absence of super-exponentially decaying eigenfunctions.
In the proofs of the both statements commutator estimates play important rolls.

\subsection{Existence of ${\mathcal B}^*$-eigenfunctions}

In this subsection, we show optimality of Theorem~\ref{1702221514}.
To show that we construct a spherically symmetric solution $\psi(x)=\tilde\psi(\tau) \in {\mathcal B}^*$ of
\begin{equation} \label{b*ef}
\left( \frac{1}{2}\Delta + |x|^{\epsilon} \right)\psi=\left( \frac{1}{2}\frac{d^2}{d\tau^2}+\frac{d-1}{2\tau}\frac{d}{d\tau}+\tau^{\epsilon} \right)\tilde\psi=0,
\end{equation}
where $\tau=|x|, \ x \in {\mathbb R}^d$.

Recall the Bessel equation:
\begin{equation} \label{besseleq}
s^2f''(s)+sf'(s)+\left( s^2-\nu^2 \right)f(s)=0,
\end{equation}
and the Bessel function $J_{\nu}(s)$ which is one of the solutions of  \eqref{besseleq}.
In \eqref{besseleq}, we let $\nu=\frac{d-2}{\epsilon+2}$, and change variables by
$$
s=\frac{2\sqrt{2}}{\epsilon+2}\tau^{\epsilon/2+1}, \quad f=\tau^{d/2-1}\tilde\psi.
$$
Then we obtain the following equation:
\begin{equation} \label{psieq}
\left( \frac{1}{2}\frac{d^2}{d\tau^2}+\frac{d-1}{2\tau}\frac{d}{d\tau}+\tau^{\epsilon} \right)\tilde\psi=0.
\end{equation}
Hence we obtain \eqref{b*ef}.
Now, we show $\psi \in {\mathcal B}^*$.
Since $\nu=\frac{d-2}{\epsilon+2}$, we can write by definition of the Bessel function
\begin{equation*}
J_{(d-2)/(\epsilon+2)}(s)=
\sum\limits_{m=0}^{\infty}\frac{(-1)^m}{m!\Gamma(m+(d-2)/(\epsilon+2)+1)}\left( \frac{s}{2} \right)^{2m+(d-2)/(\epsilon+2)}.
\end{equation*}
It is well-known that $J_{(d-2)/(\epsilon+2)}(s)=O(s^{-1/2})$ as $s \to \infty$ (e.g. \cite{k}).
Hence we have
\begin{equation} \label{psiord}
\psi(x)=\tilde\psi(\tau) = \tau^{1-d/2}J_{(d-2)/(\epsilon+2)}\left( \frac{2\sqrt{2}}{\epsilon+2}\tau^{\epsilon/2+1} \right)=O(\tau^{-d/2-\epsilon/4+1/2}), \ \tau \to \infty.
\end{equation}
By the following expression:
\begin{align*}
\psi(x) &= |x|^{1-d/2}J_{(d-2)/(\epsilon+2)}\left( \frac{2\sqrt{2}}{\epsilon+2}|x|^{\epsilon/2+1} \right) \\
  &= \sum_{m=0}^{\infty}\frac{(-1)^m}{m!\Gamma(m+(d-2)/(\epsilon+2)+1)}\left( \frac{\sqrt{2}}{\epsilon+2} \right)^{2m+(d-2)/(\epsilon+2)}|x|^{(\epsilon+2)m},
\end{align*}
we have
\begin{equation} \label{psibibun}
\psi \in C^2(\mathbb R^d).
\end{equation}
\eqref{psiord} and \eqref{psibibun} imply $\psi \in {\mathcal B}^*$.
Therefore $\psi$ is a ${\mathcal B}^*$-eigenfunction for $H=-\frac{1}{2}\Delta-|x|^{\epsilon}$.


\section{Preliminaries}

In this section we prepare some tools to prove Theorem~\ref{1702221514}.
From this section, we use a geometric notation.
However, we consider the only case of Euclidean space.
Hence it suffices to note the following properties: for $g \in C^{\infty}(\mathbb R^d)$ and $i,j,k,l=1,2, \ldots, d$
\begin{align*}
(\nabla g)_i &= (\nabla_i g) = (d g)_i = \partial_i g, \quad (\nabla g)^i = \delta^{ij}(\nabla g)_j, \\
(\nabla^2 g)_{ij} &= \partial_i \partial_j g, \quad (\nabla^2 g)^{ij} = \delta^{ik}(\nabla^2 g)_k{}^j = \delta^{ik}\delta^{jl}(\nabla^2 g)_{kl}.
\end{align*}
We also remark that the following inequality holds:
\begin{equation*}
(\nabla g)^j(\nabla g)^k \leq |dg|^2\delta^{jk},
\end{equation*}
as quadratic form estimates on fibers of the tangent bundle of $\mathbb R^d$, i.e. for any $\xi \in {\mathbb R}^d$
$$
(\nabla g)^j(\nabla g)^k\xi_j\xi_k \leq |dg|^2\xi_j\delta^{jk}\xi_k = |dg|^2|\xi|^2.
$$

First, using the function $r \in C^{\infty}({\mathbb R}^d)$ of \eqref{r}, 
we introduce the conjugate operator $A$ as a maximal differential operator
\begin{equation} \label{A0}
A={\rm Re}\left( r^{-\epsilon/2}p^r \right), \quad p^r=-i\nabla^r, \ \nabla^r=(\nabla r)_j\delta^{jk}\nabla_k,
\end{equation}
with domain 
\begin{align*}
\mathcal D(A)=\{\psi\in\mathcal H\ |\ A\psi\in\mathcal H\}.
\end{align*}
Here, for notational simplicity, we set the function $f \in C^{\infty}([1, \infty))$ as
\begin{equation}\label{f}
f(r) =
\begin{cases}
\frac{1}{1-\epsilon/2}r^{1- \epsilon/2} & {\rm for} \ 0<\epsilon <2, \\
\log{r} & {\rm for} \ \epsilon = 2.
\end{cases}
\end{equation}
Then note that the conjugate operator $A$ has the following expressions:
\begin{equation} \label{A}
A = (p^r)^*r^{-\epsilon/2} + \frac{i}{2}(\Delta f) = r^{-\epsilon/2}p^r - \frac{i}{2}(\Delta f).
\end{equation}

\subsection{Unitary group and generator}

Let
$$
y:{\mathbb R} \times {\mathbb R}^d \to {\mathbb R}^d, \ (t, x) \mapsto y(t, x)=\exp(t\nabla f)(x),
$$
be the maximal flow generated by the gradient vector field $\nabla f$.
Note that by definition it satisfies
\begin{equation*}
\partial_t y^i(t, x) = (\nabla f)^i(y(t,x)), \quad y(0, x)=x.
\end{equation*}
We define $T(t): {\mathcal H} \to {\mathcal H},\, t \in {\mathbb R}$, by
\begin{equation} \label{defTt}
\begin{split}
(T(t)\psi)(x) &= J(t, x)^{1/2}\psi(y(t, x)) \\
  &= \exp\left( \int_0^t \frac{1}{2}(\Delta f)(y(t, x)) ds \right)\psi(y(t, x)),
\end{split}
\end{equation}
where $J(t, \cdot)$ is the Jacobian of the mapping $y(t, \cdot): {\mathbb R}^d \to {\mathbb R}^d$.
We can easily verify the equivalence of the two expressions in \eqref{defTt} by the following identity:
\begin{align*}
\partial_t \bigl[ \log J(t, x)^2 \bigr] &= 2J(t, x)^{-1}\partial_t J(t, x) \\
  &= 2J(t, x)^{-1}(\Delta f)(y(t, x))J(t,x) = 2(\Delta f)(y(t, x)).
\end{align*}

Now it follows by the upper expression of \eqref{defTt} that for any $\psi \in {\mathcal H}$
$$
\|T(t)\psi\|^2 = \int_{\mathbb R} |\psi(y(t, x))|^2J(t, x) dx = \|\psi\|^2.
$$
and hence $T(t)$, $t \in {\mathbb R}$, forms a strongly continuous one-parameter unitary group.

Next we investigate the generator $A_0$ of group $T(t),\, t \in {\mathbb R}$.
By definition 
\begin{align*}
{\mathcal D}(A_0) &= \{ \psi \in {\mathcal H} \, |\, \lim_{t \to 0}(it)^{-1}(T(t)\psi - \psi) \ {\rm exists \ in}\ {\mathcal H}\}, \\
A_0\psi &= \lim_{t \to 0}(it)^{-1}(T(t)\psi - \psi) \quad {\rm for} \ \psi \in {\mathcal D}(A_0).
\end{align*}
By the Stone theorem the generator $A_0$ is self-adjoint on ${\mathcal H}$.
It is easy to verify that $C_0^\infty(\mathbb R^d)\subset \mathcal D(A_0)$, and that 
$T(t)$ preserves $C_0^{\infty}({\mathbb R}^d)$.
Hence by \cite[Theorem X.49]{rs} the space $C_0^{\infty}({\mathbb R}^d)$ is a core for $A_0$.
It is also clear by definition that on $C_0^{\infty}({\mathbb R}^d)$ the generator $A_0$ and 
maximal differential operator $A$ coincides,
and therefore they are actually the identical operators:
\begin{equation*}
{\mathcal D}(A_0)={\mathcal D}(A), \quad A_0=A={\rm Re}\left( r^{-\epsilon/2}p^r \right)=\frac{1}{2}\left( r^{-\epsilon/2}p^r + (p^r)^*r^{-\epsilon/2} \right).
\end{equation*}

\begin{lem}
Let $H^2(\mathbb R^d)$ be the Sobolev space of second order, and set
$$
H^2_{\rm comp}(\mathbb R^d) = \{ \psi \in H^2(\mathbb R^d) \,|\, {\rm supp}\,\psi \ {\rm is \ compact}\}.
$$
Then the following inclusion relations hold.
\begin{equation} \label{embed}
H^2_{\mathrm{comp}}(\mathbb R^d) \subset {\mathcal D}(H) \subset {\mathcal D}(A).
\end{equation}
\end{lem}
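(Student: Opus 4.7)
The plan is to check the two inclusions in \eqref{embed} separately. For $H^2_{\mathrm{comp}}(\mathbb R^d)\subset \mathcal D(H)$, I would argue by mollification: given $\psi\in H^2_{\mathrm{comp}}$ with $\operatorname{supp}\psi\subset K$, a standard convolution approximation yields a sequence $\psi_n\in C_0^\infty(\mathbb R^d)$, all supported in a fixed slightly larger compact set $K'$, with $\psi_n\to\psi$ in $H^2$. Since both $|x|^\epsilon$ and $q$ are bounded on $K'$, it follows that $H\psi_n\to H\psi$ in $\mathcal H$, and closedness of $H$ then forces $\psi\in\mathcal D(H)$.

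The second inclusion $\mathcal D(H)\subset \mathcal D(A)$ is the substantive part. Since $A=A_0$ is self-adjoint (hence closed) and $C_0^\infty(\mathbb R^d)$ is a core for $H$, it suffices to establish the a priori estimate
\begin{equation*}
\|A\psi\|\leq C\bigl(\|H\psi\|+\|\psi\|\bigr)\qquad \text{for all } \psi\in C_0^\infty(\mathbb R^d),
\end{equation*}
and then extend to $\mathcal D(H)$ by density in graph norm and closedness of $A$. Using the expression \eqref{A} together with the elementary bounds $|\nabla r|\leq C$ and $|\Delta f|=O(r^{-\epsilon/2-1})$ on $\mathbb R^d$, this reduces to controlling the weighted kinetic term $\|r^{-\epsilon/2}\nabla \psi\|$ by $\|H\psi\|+\|\psi\|$.

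To bound $\|r^{-\epsilon/2}\nabla \psi\|^2$ I would integrate by parts, producing a main term proportional to $\langle \psi,-r^{-\epsilon}\Delta \psi\rangle$ and a cross term involving $\nabla r^{-\epsilon}$. Substituting $-\tfrac12\Delta\psi=H\psi+|x|^\epsilon\psi-q\psi$ converts the main term into a pairing with $H\psi$, handled by Cauchy-Schwarz as $\|H\psi\|\|\psi\|$, plus contributions weighted by $r^{-\epsilon}|x|^\epsilon$ and $r^{-\epsilon}|q|$. The decisive cancellation is that $r^{-\epsilon}|x|^\epsilon$ is uniformly bounded on $\mathbb R^d$ and $r^{-\epsilon}|q|\leq C$ by Condition~\ref{con}, so both collapse into multiples of $\|\psi\|^2$. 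The cross term, thanks to $r^{\epsilon/2}|\nabla r^{-\epsilon}|=O(r^{-\epsilon/2-1})$, can be estimated by $\tfrac12\|r^{-\epsilon/2}\nabla\psi\|^2+C\|\psi\|^2$ and the first piece absorbed to the left-hand side.

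The main obstacle is to make these cancellations survive the presence of the diverging repulsive potential $-|x|^\epsilon$ in $H$; what rescues the argument is precisely the choice of weight $r^{-\epsilon/2}$ in the definition of $A$, which is tailored so that $r^{-\epsilon}|x|^\epsilon$ is bounded and the weighted kinetic energy after integration by parts is controllable by $H$ itself. Once this estimate is in place, extending to all of $\mathcal D(H)$ is automatic: approximate $\psi\in\mathcal D(H)$ in graph norm by $\psi_n\in C_0^\infty$, note that $A\psi_n$ is Cauchy in $\mathcal H$ by the estimate, and invoke closedness of $A$ to conclude $\psi\in \mathcal D(A)$.
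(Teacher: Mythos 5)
Your proof is correct, and for the first inclusion it is essentially the paper's own argument: mollify within a fixed compact neighbourhood of the support, use that $|x|^\epsilon$ and $q$ are bounded there, and conclude by closedness of $H$. For the second inclusion the decisive cancellation is the same as in the paper — the weight $r^{-\epsilon/2}$ built into $A$ makes $r^{-\epsilon}\left(|x|^\epsilon - q\right)$ globally bounded, so the weighted kinetic term is controlled by a pairing with $H$ plus $\|\psi\|^2$ — but the functional-analytic packaging differs. The paper, following Sigal, works directly with $\psi\in\mathcal D(H)$: it inserts a cutoff $\eta\in C_0^\infty(\mathbb R^d)$, sets $\omega=\eta r^{-\epsilon/2}\psi$, uses $\|p\omega\|^2=2\langle H\rangle_\omega+2\langle |x|^\epsilon-q\rangle_\omega$ together with the commutator $[H,\eta r^{-\epsilon/2}]$ to get a bound uniform in $\eta$, and then removes the cutoff by monotone convergence. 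You instead prove the a priori estimate $\|A\psi\|\le C\left(\|H\psi\|+\|\psi\|\right)$ on $C_0^\infty(\mathbb R^d)$ by integration by parts and extend to $\mathcal D(H)$ by graph-norm density plus closedness of $A$; both ingredients are indeed available in the paper (the Faris--Lavine theorem makes $C_0^\infty(\mathbb R^d)$ a core for $H$, and $A$ is closed both as a maximal operator and because $A=A_0$ is self-adjoint by the discussion preceding the lemma), so your argument is complete. What each route buys: yours avoids the cutoff/commutator computation and the monotone-convergence step, and packages the inclusion as a clean operator inequality on the core; the paper's avoids any appeal to graph-norm density, but tacitly relies on local elliptic regularity to ensure $\omega\in\mathcal D(p^2)$. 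One cosmetic point: the rate $|\Delta f|=O(r^{-\epsilon/2-1})$ is exact only where $r(x)=|x|$; on the transition region one only has boundedness of $\Delta(f\circ r)$, which is all your estimate actually uses.
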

\begin{proof}
First we prove $H^2_{\mathrm{comp}}(\mathbb R^d) \subset {\mathcal D}(H)$.
Let $\psi \in H^2_{\rm comp}(\mathbb R^d)$ and ${\rm supp}\,\psi =K$, and set
$$
K_1=\left\{x \in {\mathbb R}^d\ | \ \inf_{y \in K}|x-y| \leq 1 \right\}.
$$
Then there exists $\{\psi_n\} \subset C_0^{\infty}(\mathbb R^d)$ such that
\begin{equation} \label{psin}
{\rm supp}\,\psi_n \subset K_1, \quad \| \psi_n - \psi \|+\| p^2(\psi_n - \psi)\| \to 0 \ \ {\rm as} \ n \to \infty.
\end{equation}
By \eqref{psin} and Condition~\ref{con} we can estimate as follows.
\begin{equation*}
\|H\psi_n - H\psi\| + \|\psi_n - \psi\| \leq \frac{1}{2}\|p^2(\psi_n-\psi)\|+C_K\|\psi_n-\psi\| \to 0 \ \ {\rm as}\ \ n \to \infty.
\end{equation*}
This implies $\psi \in {\mathcal D}(H)$.

Now we prove ${\mathcal D}(H) \subset {\mathcal D}(A)$.
Let us discuss similarly to \cite{sig}.
Let $\psi \in {\mathcal D}(H)$.
It suffices to show that $r^{-\epsilon/2}\psi \in {\mathcal D}\left( p^r \right)={\mathcal D}\left( (\nabla r)^jp_j \right)$.
We choose $\eta \in C_0^{\infty}(\mathbb R^d)$ such that for any multi-index $\alpha$
$$
0 \leq \eta \leq 1, \quad |\partial^{\alpha}\eta|\leq C_{\alpha}, \ C_{\alpha}>0,
$$
and we let $\omega=\eta r^{-\epsilon/2}\psi$.
Then we have $\omega \in {\mathcal D}(p^2)$ and we obtain the following estimate:
\begin{equation} \label{emb1}
\| (\nabla r)^jp_j\omega \|^2 \leq C_1\| p\omega\|^2 = 2C_1\langle H \rangle_{\omega} + 2C_1\langle |x|^{\epsilon}-q \rangle_{\omega},
\end{equation}
where in general for a linear operator $T$ we write
$$
\langle T \rangle_{\omega} = \langle \omega, T\omega \rangle.
$$
We estimate the first term of \eqref{emb1} by
\begin{equation} \label{emb2}
\begin{split}
2C_1\langle \omega, H\omega \rangle &= 2C_1\langle \psi, \eta r^{-\epsilon}\eta H\psi \rangle + 2C_1{\rm Re}\langle \psi, \eta r^{-\epsilon/2}[H, \eta r^{-\epsilon/2}]\psi \rangle \\
  &\leq C_2\|\psi\| \|H\psi\| + \| |\nabla \eta r^{-\epsilon/2}|\psi \|^2 \\
  &\leq C_3\|\psi\|^2 + C_3\| H\psi \|^2 < \infty.
\end{split}
\end{equation}
Using the Condition~\ref{con} we can estimate the second term of \eqref{emb1} as
\begin{equation} \label{emb3}
\langle |x|^{\epsilon}-q \rangle_{\omega} \leq C_4\|\psi\|^2 < \infty.
\end{equation}
Hence we obtain by \eqref{emb1}, \eqref{emb2} and \eqref{emb3}
$$
\|(\nabla r)^jp_j\omega\| \leq C_5.
$$
It provides
$$
\|\eta (\nabla r)^jp_jr^{-\epsilon/2}\psi\| \leq C_6.
$$
Hence by Lebesgue's monotone convergence theorem we obtain
$$
\|(\nabla r)^jp_jr^{-\epsilon/2}\psi\| \leq C_7.
$$
We are done.
\end{proof}

\subsection{Commutators with weight inside}

Next we consider commutators with a weight $\Theta$ inside:
\begin{equation*}
[ H, iA ]_{\Theta} := i(H\Theta A - A\Theta H).
\end{equation*}
Let $\Theta =\Theta(r)$ be a non-negative smooth function with bounded derivatives.
More explicitly, if we denote its derivatives in $r$ by primes such as $\Theta'$, then
\begin{equation} \label{theta1}
\Theta \geq 0, \quad |\Theta^{(k)}|\leq C_k, \ \  k=0,1,2,\ldots .
\end{equation}
We first define the quadratic form $[H,iA]_{\Theta}$ on $C^\infty_0(\mathbb R^d)$,
and then extend it to $H^1(\mathbb R^d)$ when $\Theta$ is compactly supported according to 
the following lemma.

\begin{lem} \label{lem21}
Suppose Condition \ref{con}, and let $\Theta$ be a non-negative smooth function with bounded derivatives {\rm \eqref{theta1}}.
Then, as quadratic forms on $C_0^{\infty}({\mathbb R}^d)$,
\begin{align}
\begin{split} \label{commest0}
[H, iA]_{\Theta} &= p_j(\nabla^2 f)^{jk}\Theta p_k + (p^r)^*r^{-\epsilon/2}\Theta'p^r + \frac{1}{2}{\rm Re}\left( (\Delta f)\Theta p_i\delta^{ij}p_j \right) \\
  &\quad \, - \frac{1}{2}p_i(\Delta f)\Theta\delta^{ij}p_j - \frac{1}{2}{\rm Im}\left( (\nabla |dr|^2)^jr^{-\epsilon/2}\Theta'p_j \right) - {\rm Im}\left( 2q_2r^{-\epsilon/2}\Theta p^r \right) \\
  &\quad \, - {\rm Re}\left( |dr|^2r^{-\epsilon/2}\Theta' H \right) + \epsilon r^{-\epsilon/2}(\nabla r)^k|x|^{\epsilon-2}x_k\Theta + q_{\Theta} - \frac{1}{4}|dr|^4r^{-\epsilon/2}\Theta''';
\end{split} \\
q_{\Theta} &= - r^{-\epsilon/2}(\nabla^r q_1)\Theta + q_2(\Delta f)\Theta + \frac{\epsilon}{8}(\nabla^r |dr|^2)r^{-\epsilon/2-1}\Theta' \notag \\
  &\quad \, - \frac{\epsilon}{8}\left( \frac{\epsilon}{2}+1 \right)|dr|^4r^{-\epsilon/2-2}\Theta' + \frac{\epsilon}{8}|dr|^2r^{-\epsilon/2-1}(\Delta r)\Theta' + |dr|^2r^{-\epsilon/2}q_2\Theta' \notag \\
  &\quad \, + \frac{\epsilon}{4}|dr|^4r^{-\epsilon/2-1}\Theta'' - \frac{1}{4}(\nabla^r |dr|^2)r^{-\epsilon/2}\Theta'' - \frac{1}{4}|dr|^2r^{-\epsilon/2}(\Delta r)\Theta''. \notag
\end{align}
In particular, if $\Theta$ has a compact support, by the Cauchy-Schwarz inequality $[H, iA]_{\Theta}$ restricted to $C_0^{\infty}({\mathbb R}^d)$ extends to a bounded form on $H^1(\mathbb R^d)$.
\end{lem}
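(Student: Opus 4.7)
The plan is to verify \eqref{commest0} by expanding
\[
i(H\Theta A - A\Theta H) = i\bigl(-\tfrac12 p^2 - |x|^\epsilon + q_1 + q_2\bigr)\Theta A - iA\Theta\bigl(-\tfrac12 p^2 - |x|^\epsilon + q_1 + q_2\bigr)
\]
on $C_0^\infty(\mathbb R^d)$ and computing the contribution of each of the four pieces of $H$ separately. Throughout I would employ the expression $A = r^{-\epsilon/2}p^r - \frac{i}{2}\Delta f$ from \eqref{A}, the identity $\nabla f = r^{-\epsilon/2}\nabla r$, and the basic commutator $[p_j,\Theta] = -i(\nabla r)_j\Theta'$ together with its iterates, which is what generates every $\Theta^{(k)}$ factor in the final formula.

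The kinetic part $i(H_0\Theta A - A\Theta H_0)$ with $H_0=-\tfrac12 p^2$ is the main computation. Writing $r^{-\epsilon/2}p^r=(\nabla f)^jp_j$, the classical Mourre-type identity $i[H_0,(\nabla f)^jp_j]=p_j(\nabla^2 f)^{jk}p_k+\text{(scalar)}$ already flags the principal term of \eqref{commest0}; inserting $\Theta$ and pushing it past the outer $p_j$'s to recover the symmetric $p_j(\nabla^2 f)^{jk}\Theta p_k$ drops derivatives of $\Theta$, while the scalar piece $-\tfrac{i}{2}\Delta f$ of $A$ contributes $\tfrac12[H_0,\Theta\Delta f]$. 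Collecting these with their counterparts from $A\Theta H_0$ and symmetrizing (the full quadratic form is self-adjoint, so $\mathrm{Re}$ and $\mathrm{Im}$ parts emerge naturally from averaging the ``$p^r$ on the right'' and ``$p^r$ on the left'' expansions) produces the $(p^r)^*r^{-\epsilon/2}\Theta' p^r$ term, the two $\Delta f$ first-order terms, the $\mathrm{Im}((\nabla|dr|^2)^jr^{-\epsilon/2}\Theta'p_j)$ remainder, the three $\Theta''$ entries of $q_\Theta$, and the single $-\tfrac14|dr|^4r^{-\epsilon/2}\Theta'''$ entry of \eqref{commest0}. The $-\mathrm{Re}(|dr|^2r^{-\epsilon/2}\Theta' H)$ term is produced by isolating the $-\tfrac12 p^2$ that drops out when $p^r$ is commuted past $\Theta$ on both sides and completing it to $H$ by adding $-|x|^\epsilon+q$, whose compensating contributions are reabsorbed into the potential commutators below.

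The three potential commutators are simpler. Since $|x|^\epsilon$ and $\Theta(r)$ are both multiplication operators, the repulsive potential contributes $-\Theta\,i[|x|^\epsilon,A]=\epsilon r^{-\epsilon/2}(\nabla r)^k|x|^{\epsilon-2}x_k\Theta$. The $C^1$ bound in Condition~\ref{con} legitimizes the direct computation $i[q_1,r^{-\epsilon/2}p^r]\Theta=-r^{-\epsilon/2}(\nabla^rq_1)\Theta$, which is the leading entry of $q_\Theta$. For the merely bounded $q_2$ no derivative may be taken: in $i(q_2\Theta A - A\Theta q_2)$ the $-\tfrac{i}{2}\Delta f$ piece cancels (all factors are multiplication operators), and in the $r^{-\epsilon/2}p^r$ piece I would work at the quadratic-form level and integrate by parts so that the single derivative is transferred off $q_2$ and onto the test function, producing exactly the $-\mathrm{Im}(2q_2r^{-\epsilon/2}\Theta p^r)$ term of \eqref{commest0} together with the $q_2(\Delta f)\Theta$ and $|dr|^2r^{-\epsilon/2}q_2\Theta'$ entries of $q_\Theta$. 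Summing everything and reorganizing scalar remainders yields \eqref{commest0}.

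For the extension to $H^1(\mathbb R^d)$ when $\mathrm{supp}\,\Theta$ is compact, note that $r\ge 1$ everywhere by \eqref{r}, so every coefficient in the stated formula is bounded with compact support (using Condition~\ref{con} for the $q_j$-dependent factors). Each first-order term of the shape $T_1\cdot(\text{coef})\cdot T_2$ with $T_1,T_2\in\{1,p_i,p^r\}$ is then bounded by $\|\phi\|_{H^1}\|\psi\|_{H^1}$ via Cauchy-Schwarz, the outer $p_i$ on the left being transferred to the test function by the symmetry of $p_i$; the lone second-order term $-\mathrm{Re}(|dr|^2r^{-\epsilon/2}\Theta' H)$ is handled by integrating by parts once inside $-\tfrac12 p^2$, reducing the whole bilinear pairing to first derivatives. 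Density of $C_0^\infty(\mathbb R^d)$ in $H^1(\mathbb R^d)$ delivers the continuous extension. The main obstacle is organizational rather than analytic: ensuring that every $\Theta',\Theta'',\Theta'''$ contribution, together with the geometric factors $|dr|^2,\Delta r,\nabla^r|dr|^2$ arising because $r$ is not exactly $|x|$ near the origin, is placed correctly and neither lost nor double-counted during symmetrization.
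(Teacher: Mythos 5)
Your proposal is correct and follows essentially the same route as the paper: a direct expansion of $i(H\Theta A-A\Theta H)$ on $C_0^\infty(\mathbb R^d)$ using the expression \eqref{A} for $A$, commuting the momenta past $\Theta(r)$, treating $q_2$ purely at the form level so that no derivative falls on it, generating the term $-{\rm Re}\left( |dr|^2r^{-\epsilon/2}\Theta' H \right)$ by completing the leftover kinetic and potential $\Theta'$-terms to $H$ (with the compensating $q_2$ and $\Delta$-correction terms absorbed into $q_\Theta$), and concluding the $H^1$-boundedness from Cauchy--Schwarz together with the boundedness and compact support of all coefficients. This matches the paper's computation leading from \eqref{commest1} and \eqref{commest2} to \eqref{commest0}.
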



\begin{proof}
By \eqref{A} we obtain
\begin{align}
[H, iA]_{\Theta}
  &= 2{\rm Im}\left[A\Theta \left\{ \frac{1}{2}p_i\delta^{ij}p_j - |x|^{\epsilon} + q \right\} \right] \notag \\
  &= \frac{1}{2i}\left\{ p_ip_k(\nabla r)^kr^{-\epsilon/2}\Theta\delta^{ij}p_j - p_i\delta^{ij}\Theta r^{-\epsilon/2}(\nabla r)^kp_kp_j \right\} + p_j(\nabla^2 f)^{jk}\Theta p_k \notag \\
  &\quad + p_j(\nabla r)^jr^{-\epsilon/2}\Theta'(\nabla r)^kp_k + \frac{1}{2}{\rm Re}\left( (\Delta f)\Theta p_i\delta^{ij}p_j \right) + |dr|^2r^{-\epsilon/2}|x|^{\epsilon}\Theta' \notag \\
  &\quad + \epsilon r^{-\epsilon/2}(\nabla r)^k|x|^{\epsilon-2}x_k\Theta - |dr|^2r^{-\epsilon/2}q_1\Theta' - r^{-\epsilon/2}(\nabla^r q_1)\Theta - {\rm Im}\left( 2q_2\Theta A \right) \notag \\
\begin{split} \label{commest1}
  &= p_j(\nabla^2 f)^{jk}\Theta p_k + p_j(\nabla r)^jr^{-1}\Theta'(\nabla r)^kp_k + \frac{1}{2}{\rm Re}\left( (\Delta f)\Theta p_i\delta^{ij}p_j \right) \\
  &\quad - \frac{1}{2}p_i(\Delta f)\Theta\delta^{ij}p_j - \frac{1}{2}p_i|dr|^2r^{-\epsilon/2}\Theta'\delta^{ij}p_j + |dr|^2r^{-\epsilon/2}|x|^{\epsilon}\Theta' \\
  &\quad + \epsilon r^{-\epsilon/2}(\nabla r)_k|x|^{\epsilon-2}x^k\Theta - |dr|^2r^{-\epsilon/2}q_1\Theta' - r^{-\epsilon/2}(\nabla^r q_1)\Theta \\
  &\quad - {\rm Im}\left( 2q_2r^{-\epsilon/2}\Theta p^r \right) + q_2(\Delta f)\Theta.
\end{split}
\end{align}

\noindent
We combine the fifth, the sixth and the eighth terms of \eqref{commest1} as follows.
\begin{equation} \label{commest2}
\begin{split}
  &\quad \hspace{-3mm} - \frac{1}{2}p_i|dr|^2r^{-\epsilon/2}\Theta'\delta^{ij}p_j + |dr|^2r^{-\epsilon/2}|x|^{\epsilon}\Theta' - |dr|^2r^{-\epsilon/2}q_1\Theta' \\
  &= - \frac{1}{2}{\rm Im}\left( \left( \nabla |dr|^2 \right)^jr^{-\epsilon/2}\Theta'p_j \right) + \frac{\epsilon}{4}{\rm Im}\left( |dr|^2r^{-\epsilon/2-1}\Theta'p^r \right) \\
  &\quad - \frac{1}{2}{\rm Im}\left( |dr|^2r^{-\epsilon/2}\Theta''p^r \right) - {\rm Re}\left( |dr|^2r^{-\epsilon/2}\Theta' H \right) + |dr|^2r^{-\epsilon/2}q_2\Theta' \\
  &= - \frac{1}{2}{\rm Im}\left( \left( \nabla |dr|^2 \right)^jr^{-\epsilon/2}\Theta'p_j \right) - {\rm Re}\left( |dr|^2r^{-\epsilon/2}\Theta' H \right) + |dr|^2r^{-\epsilon/2}q_2\Theta' \\
  &\quad + \frac{\epsilon}{8}\left( \nabla^r |dr|^2 \right)r^{-\epsilon/2-1}\Theta' - \frac{\epsilon}{8}\left( \frac{\epsilon}{2}+1 \right)|dr|^4r^{-\epsilon/2-2}\Theta' + \frac{\epsilon}{8}|dr|^2r^{-\epsilon/2-1}(\Delta r)\Theta' \\
  &\quad + \frac{\epsilon}{4}|dr|^4r^{-\epsilon/2-1}\Theta'' - \frac{1}{4}\left( \nabla^r |dr|^2 \right)r^{-\epsilon/2}\Theta'' - \frac{1}{4}|dr|^2r^{-\epsilon/2}(\Delta r)\Theta'' \\
  &\quad - \frac{1}{4}|dr|^4r^{-\epsilon/2}\Theta'''.
\end{split}
\end{equation}
If we substitute \eqref{commest2} into \eqref{commest1}, then the expression \eqref{commest0} follows.

The boundedness of $[H, iA]_{\Theta}$ as a quadratic form on $H^1(\mathbb R^d)$
follows from \eqref{embed}, \eqref{commest0} and compactness of {\rm supp}$\,\Theta$.
\end{proof}

In the above argument we defined the weighted commutator $[H, iA]_{\Theta}$ as a quadratic form on 
$H^1(\mathbb R^d)$ as an extension from $C_0^{\infty}(\mathbb R^d)$.
On the other hand, throughout the paper, we shall use the notation
\begin{equation*} 
{\rm Im}(A\Theta H)=\frac{1}{2i}(A\Theta H - H\Theta A)
\end{equation*}
as a quadratic form defined on ${\mathcal D}(H)$, i.e. for $\psi \in {\mathcal D}(H)$
$$\langle {\rm Im}(A\Theta H) \rangle_{\psi} = \frac{1}{2i}\left( \langle A\psi, \Theta H\psi \rangle - \langle H\psi, \Theta A\psi \rangle \right).$$
Note that by the embedding \eqref{embed} the above quadratic form is well-defined.
Obviously the quadratic forms $[H, iA]_{\Theta}$ and $2{\rm Im}(A\Theta H)$ coincide on $C_0^{\infty}(\mathbb R^d)$,
and hence we obtain
\begin{equation}\label{equal}
[H, iA]_{\Theta} = 2{\rm Im}(A\Theta H) \quad {\rm on} \ {\mathcal D}(H),
\end{equation}
if $\Theta$ is compactly supported.
In fact, by the Faris-Lavine theorem for any $\psi \in {\mathcal D}(H)$ there exists $\{\psi_n\} \subset C_0^{\infty}({\mathbb R}^d)$ such that
$$
\|\psi-\psi_n\|+\|H(\psi-\psi_n)\| \to 0\quad\text{as }n\to\infty.
$$
Therefore we obtain
$$
\langle [H, iA]_{\Theta} \rangle_{\psi} = \lim_{n \to \infty}\langle [H, iA]_{\Theta} \rangle_{\psi_n} = \lim_{n \to \infty}\langle 2{\rm Im}(A\Theta H) \rangle_{\psi_n} = \langle 2{\rm Im}(A\Theta H) \rangle_{\psi}.
$$


\section{Proof of Theorem~\ref{1702221514}}

The proof of Theorem~\ref{1702221514} consists of two steps, 
a priori super-exponential decay estimates and the absence of super-exponentially decaying eigenfunctions.
Obviously, Theorem~\ref{1702221514} follows immediately as a combination of the following propositions.
Throughout the section we suppose Condition~\ref{con}.

\begin{prop} \label{prop}
Let $\lambda \in {\mathbb R}$. 
If a function $\phi \in {\mathcal B}_0^*$ satisfies that
$$
(H-\lambda)\phi =0,
$$
in the distributional sense, then $e^{\alpha r}\phi \in {\mathcal B}_0^*$ for any $\alpha \geq 0$.
\end{prop}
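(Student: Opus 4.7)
The plan is a bootstrap: set
$$
\alpha_0 = \sup\bigl\{\alpha\geq 0\ \big|\ \mathrm{e}^{\alpha r}\phi\in\mathcal B_0^*\bigr\}\in[0,\infty],
$$
note $\alpha_0\geq 0$ since $\phi\in\mathcal B_0^*$, and argue by contradiction: assuming $\alpha_0<\infty$, I will exhibit some $\delta>0$ with $\mathrm{e}^{(\alpha_0+\delta)r}\phi\in\mathcal B_0^*$, contradicting maximality. The required improvement comes from a one-step commutator estimate that is uniform in $\alpha$ just below $\alpha_0$.

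The key identity is \eqref{equal} applied to $\phi$ with the compactly supported, non-negative weight
$$
\Theta_{\alpha,m}(r) = \chi(r/m)^{2}\mathrm{e}^{2\alpha r},\qquad m\geq 1,
$$
which satisfies \eqref{theta1} for each fixed $m$. From \eqref{A} and the fact that $\Delta f$ and $\Theta_{\alpha,m}$ commute as functions of $r$, one obtains $[A,\Theta_{\alpha,m}]=-ir^{-\epsilon/2}|dr|^{2}\Theta_{\alpha,m}'$, and together with $H\phi=\lambda\phi$ and the self-adjointness of $H$ this reduces the right-hand side of \eqref{equal} on $\phi$ to
$$
\bigl\langle[H,iA]_{\Theta_{\alpha,m}}\bigr\rangle_\phi
=-\lambda\,\bigl\langle r^{-\epsilon/2}|dr|^{2}\Theta_{\alpha,m}'\bigr\rangle_\phi,
$$
a bounded linear functional of $\Theta_{\alpha,m}'$ controllable by the $\mathcal B_0^*$-norm of $\mathrm{e}^{\alpha r}\phi$ for $\alpha<\alpha_0$.

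On the left-hand side I expand via \eqref{commest0}. In the bulk $\{r\leq m\}$, $\Theta_{\alpha,m}'=2\alpha\Theta_{\alpha,m}$, and the principal positive contributions, namely $(p^r)^{*}r^{-\epsilon/2}\Theta_{\alpha,m}'p^r$, $p_j(\nabla^2 f)^{jk}\Theta_{\alpha,m}p_k$ and the virial term $\epsilon r^{-\epsilon/2}(\nabla r)^{k}|x|^{\epsilon-2}x_k\Theta_{\alpha,m}$, combine, after invoking the eigenvalue equation to rewrite $p^2\phi$ as $2(|x|^\epsilon-q+\lambda)\phi$, into a positive lower bound of order $\alpha\int r^{\epsilon/2}\mathrm{e}^{2\alpha r}|\phi|^{2}\chi(r/m)^{2}\,\mathrm{d}x$. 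The delicate term $-\mathrm{Re}(|dr|^{2}r^{-\epsilon/2}\Theta_{\alpha,m}'H)$, applied to $\phi$, reduces to the multiplication $-\lambda|dr|^{2}r^{-\epsilon/2}\Theta_{\alpha,m}'$, lower order relative to the principal term by a factor of $r^{-\epsilon/2}$; the subprincipal pieces of $q_{\Theta_{\alpha,m}}$ are suppressed by $r^{-\rho}$ via Condition \ref{con}; the $\chi'(r/m)$-localized errors, supported in $\{m\leq r\leq 2m\}$, vanish as $m\to\infty$ because $\mathrm{e}^{\alpha r}\phi\in\mathcal B_0^*$. Letting $m\to\infty$ by monotone convergence yields
$$
\alpha\int_{\mathbb R^d}r^{\epsilon/2}\mathrm{e}^{2\alpha r}|\phi|^{2}\,\mathrm{d}x\leq C(\lambda,\alpha_0),\qquad \alpha\in[\alpha_0-\delta_0,\alpha_0),
$$
which extends to $\alpha=\alpha_0$ by monotone convergence. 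This $r^{\epsilon/2}$-weighted integrability is strictly stronger than membership in $\mathcal B_0^*$, and feeding it back as the input decay hypothesis for a second application of the same commutator estimate at $\alpha'=\alpha_0+\delta'$ closes the bootstrap, producing $\mathrm{e}^{\alpha' r}\phi\in\mathcal B_0^*$ and the desired contradiction.

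The main obstacle I expect is the precise extraction of the positive leading term in \eqref{commest0}: the $H$-valued contribution $-\mathrm{Re}(|dr|^{2}r^{-\epsilon/2}\Theta'H)$ conceals the unbounded growth of $|x|^{\epsilon}$ and must be controlled simultaneously with the many scaling remainders collected in $q_{\Theta}$, uniformly in $\alpha$ near $\alpha_0$. A related subtlety is verifying that the $r^{\epsilon/2}$-integrability gain actually converts, through a second application of the estimate, into a strictly positive exponential gain $\delta'>0$; the bootstrap only contradicts the definition of $\alpha_0$ if the step size does not degenerate as $\alpha\nearrow\alpha_0$.
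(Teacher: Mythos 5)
Your plan breaks down at the decisive step, the passage from $\alpha_0$ to $\alpha_0+\delta'$. With the sharp weight $\Theta_{\alpha',m}=\chi(r/m)^2e^{2\alpha' r}$, the cutoff errors produced by $\chi'(r/m)$ live in the annulus $\{m\leq r\leq 2m\}$ and carry the full factor $e^{2\alpha' r}\sim e^{2(\alpha_0+\delta')m}$ there. The only input your first step provides is a \emph{polynomially} improved bound, $r^{\epsilon/4}e^{\alpha_0 r}\phi\in\mathcal H$; against the extra factor $e^{2\delta' r}$ this gives error terms of size $e^{2\delta' m}$ times a quantity decaying only polynomially in $m$, which diverges as $m\to\infty$. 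No amount of $r^{\epsilon/2}$-weighted integrability at exponent $\alpha_0$ converts into a strictly positive exponential gain, so the bootstrap never closes and the contradiction with the definition of $\alpha_0$ is not reached. (Two further, more repairable, issues: $\phi\in\mathcal B_0^*$ need not lie in $\mathcal H$, let alone $\mathcal D(H)$, so \eqref{equal} cannot be applied to $\phi$ directly --- one must test on localizations such as $\chi_{m-2,n+2}\phi\in H^2_{\rm comp}\subset\mathcal D(H)$; and your uniform constant $C(\lambda,\alpha_0)$ for $\alpha\nearrow\alpha_0$ is asserted rather than proved, since the right-hand side $-\lambda\langle r^{-\epsilon/2}|dr|^2\Theta'\rangle_\phi$ is controlled by norms of $e^{\alpha r}\phi$ that could a priori blow up as $\alpha\nearrow\alpha_0$; it must instead be absorbed into the positive bulk term, which you do not carry out.)

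The paper avoids the bootstrap altogether, and this is exactly the point of the regularized weights \eqref{theta2}: the exponent $\theta^{\alpha,\delta}_{\nu}=2\alpha\int_0^r(1+s/R_\nu)^{-1-\delta}ds$ is \emph{bounded} for each fixed $\nu$ (by $2\alpha R_\nu/\delta$), so every term in the commutator inequality of Lemma~\ref{lem} is finite using only $\phi\in\mathcal B_0^*$, with constants uniform in $n>m\geq n_0$ and $\nu\geq n_0$. Evaluating \eqref{keyest1} on $\chi_{m-2,n+2}\phi$ (on which the $\gamma(H-\lambda)$ term vanishes by the support condition), one first lets $n\to\infty$, where the outer error $R_n^{\epsilon/2-1}\|\chi_{n-1,n+1}\phi\|^2\to0$ precisely because $\phi\in\mathcal B_0^*$ and $e^{\theta}$ is bounded for fixed $\nu$; then one lets $\nu\to\infty$, and since $\theta\nearrow 2\alpha r$ monotonically while the right-hand side is independent of $\nu$, monotone convergence yields $r^{\epsilon/4-1/2}e^{\alpha r}\phi\in\mathcal H$, hence $e^{\alpha r}\phi\in\mathcal B_0^*$, simultaneously for every $\alpha\geq0$. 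In short, the missing idea in your proposal is the regularization of the exponential weight inside the commutator, which is what makes the estimate uniform in the parameter that is eventually sent to infinity; without it, the sharp cutoff weight forces exponentially large boundary terms that your induction hypothesis cannot control.
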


\begin{prop} \label{prop2}
Let $\lambda \in {\mathbb R}$. 
If a function $\phi \in {\mathcal B}_0^*$ satisfies that
\vspace{-0.7mm}
\begin{itemize} \setlength{\parskip}{-0.6mm}
  \item[(1)] $(H-\lambda)\phi =0$ in the distributional sense,
  \item[(2)] $e^{\alpha r}\phi \in {\mathcal B}_0^*$ for any $\alpha \geq 0$,
\end{itemize}
\vspace{-0.7mm}
then $\phi(x) = 0$ in ${\mathbb R}^d$.
\end{prop}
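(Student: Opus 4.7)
The plan is to argue by contradiction: assume $\phi\not\equiv 0$ and use the weighted commutator identity \eqref{equal} with a weight $\Theta$ growing exponentially along the radial flow of Section~2, so as to extract a quantitative positivity that dwarfs the super-exponentially small boundary contributions. The super-exponential decay from hypothesis~(2), together with local elliptic regularity applied to $(H-\lambda)\phi=0$, places $\phi$ in $\mathcal D(H)\subset\mathcal D(A)$ (via \eqref{embed}), so every quadratic form below is well-defined.

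I would take $\Theta=\Theta_{\alpha,R}(r)=e^{2\alpha f(r)}\chi(r/R)$, with $f$ as in \eqref{f} and $\chi$ as in \eqref{chi}. Pairing \eqref{equal} against $\phi$ and using $H\phi=\lambda\phi$, the right-hand side collapses via the easy computation $[A,\Theta]=-ir^{-\epsilon/2}|dr|^2\Theta'$ (immediate from \eqref{A}) to
\begin{equation*}
\langle\phi,2{\rm Im}(A\Theta H)\phi\rangle=-\lambda\int r^{-\epsilon/2}|dr|^2\Theta'\,|\phi|^2\,dx.
\end{equation*}
The key observation is that this expression is matched \emph{exactly} by the summand $-{\rm Re}(|dr|^2r^{-\epsilon/2}\Theta'H)$ in the expansion \eqref{commest0} when evaluated on the eigenfunction. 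Moving that summand across, the remaining terms of \eqref{commest0} and of $q_\Theta$ are forced to sum to zero on $\phi$.

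The positive reservoir left behind consists of the tangential part of $p_j(\nabla^2f)^{jk}\Theta p_k$, the kinetic weight $(p^r)^*r^{-\epsilon/2}\Theta'p^r$ (positive in the bulk, where $\Theta'=2\alpha r^{-\epsilon/2}\Theta>0$), and the repulsive boost $\epsilon r^{-\epsilon/2}(\nabla r)^k|x|^{\epsilon-2}x_k\Theta\sim\epsilon r^{\epsilon/2-1}\Theta$. The use of the radial-flow weight $e^{2\alpha f}$ is crucial: successive derivatives of $\Theta$ acquire factors of $f'=r^{-\epsilon/2}$, so $\Theta^{(k)}$ carries a factor $\alpha^k r^{-k\epsilon/2}\Theta$ to leading order. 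This tames the cubic-in-$\alpha$ term $-\tfrac14|dr|^4r^{-\epsilon/2}\Theta'''$ and the $\Theta',\Theta''$-terms in $q_\Theta$, which can then be absorbed into the $\alpha$-growth of $(p^r)^*r^{-\epsilon/2}\Theta'p^r$ once $R$ is sufficiently large, after using Condition~\ref{con} to discard the $r^{-\rho}$-decaying errors from $q_1,q_2$. Everything supported on the shell $\{R\le r\le 2R\}$ through $\chi'(r/R)$ is made negligible by hypothesis~(2), which grants $|\phi|$ and (via the equation) $|p\phi|$ faster-than-any-polynomial decay there. Sending first $R\to\infty$ and then $\alpha\to\infty$ (after dividing through by $\alpha$) forces the resulting bulk positive density to integrate to zero against $|\phi|^2$, giving $\phi\equiv 0$ in $\mathbb R^d$.

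The hard part will be the precise bookkeeping of the $\alpha$-powers. The combination $\tfrac12{\rm Re}((\Delta f)\Theta p^2)-\tfrac12 p_i(\Delta f)\Theta\delta^{ij}p_j$ fortunately collapses to the multiplication operator $-\tfrac14\Delta((\Delta f)\Theta)$ after a single commutator step, but the cross-term $-{\rm Im}(2q_2r^{-\epsilon/2}\Theta p^r)$ is linear in $p^r$ and must be absorbed into the $(p^r)^*r^{-\epsilon/2}\Theta'p^r$ reservoir by a Cauchy--Schwarz split that still leaves a positive surplus of order $\alpha$; here the decay $|q_2|\le Cr^{\epsilon/2-1-\rho}$ from Condition~\ref{con} is essential. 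A secondary subtlety, handled exactly as in the discussion following \eqref{equal}, is justifying the quadratic-form identities on $\phi$ by approximating in $\mathcal D(H)$ via the Faris--Lavine core $C_0^\infty(\mathbb R^d)$.
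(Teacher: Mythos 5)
Your overall scheme (a positive commutator with an exponential weight built from the conjugate operator $A$, tested on the eigenfunction, with the cutoff shell controlled by the super-exponential decay) is the same in spirit as the paper's, but as written it has a genuine gap at the end. Your weight $\Theta=e^{2\alpha f(r)}\chi(r/R)$ carries no inner cutoff, and the positivity you invoke simply does not hold on a fixed bounded region: the errors from $q_1,q_2$ are only small relative to the main terms because of the factors $r^{-\rho}$, which is useful only for $r$ large; moreover on the gluing region $1\le|x|\le2$ of the regularized $r$ the terms involving $\chi',\chi''$ (e.g.\ through $\nabla^2 f$ and $-\tfrac14\Delta((\Delta f)\Theta)$) are of indefinite sign and of size $\alpha^3 e^{2\alpha f(2)}$ when paired with $\phi$. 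So after letting $R\to\infty$ the inequality you obtain is not ``positive bulk integral $\le 0$'' but rather ``positive bulk integral, supported in $\{r\ge r_1\}$, $\le C(\alpha)e^{2\alpha f(r_1)}\|\phi\|^2_{\{r\le r_1\}}$'' for some fixed $r_1$. From this one can still conclude something, but only by comparing growth rates in $\alpha$ (since $f(r)>f(r_1)$ on the bulk), and what one gets is $\phi\equiv0$ on $\{r> r_1\}$, i.e.\ vanishing outside a compact set --- not $\phi\equiv0$ on ${\mathbb R}^d$. Your statement that sending $R\to\infty$ and then $\alpha\to\infty$ ``forces the bulk positive density to integrate to zero'' skips exactly this point.

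The second, related omission is unique continuation, which cannot be dispensed with in this setup: on $\{|x|\le1\}$ one has $\nabla r\equiv0$, so $A$, $\Theta'$, $\nabla^2 f$, $\Delta f$ all vanish there and the weighted commutator form carries no information whatsoever about $\phi$ on that ball (this is true for any weight of the form $\Theta(r)$). The paper's proof runs the same kind of estimate with the weight $\chi_{m,n}e^{2\alpha r}$ (Lemma 3.4), evaluates it on $\chi_{m-2,n+2}\phi$, lets $n\to\infty$, normalizes by $e^{2\alpha R_{m+2}}$ to get a right-hand side uniform in $\alpha$, and deduces $\bar\chi_{m+2}\phi\equiv0$ from the exponential growth of the left-hand side; it then concludes $\phi\equiv0$ globally by invoking the unique continuation property for second-order elliptic operators (Wolff). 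You need this final step (or a substitute for it), and you also need either an inner cutoff as in the paper or an explicit growth-rate comparison to handle the compact region where your error terms are not small. The remaining ingredients of your proposal (membership $\phi\in\mathcal D(H)$, the cancellation of the $\lambda$-term against $-{\rm Re}(|dr|^2r^{-\epsilon/2}\Theta'H)$, absorption of the $\Theta'''$ and $q_2$ terms via completion of the square in $p^r$) are consistent with the paper's computations, though the absorption of the $\alpha^3$-term requires the explicit square $\bigl(p^r+\tfrac i2\theta'\bigr)(\cdots)\bigl((p^r)^*-\tfrac i2\theta'\bigr)$ rather than a direct Cauchy--Schwarz against $(p^r)^*r^{-\epsilon/2}\Theta'p^r$, which by itself is only linear in $\alpha$ and acts on $|p^r\phi|^2$, not $|\phi|^2$.
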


We prove Propositions~\ref{prop} and \ref{prop2} in Subsections 3.1 and 3.2, respectively.
The proofs are quite similar to each other, and both are dependent on commutator estimates with 
particular forms of weights inside.

Now, using the function $\chi \in C^{\infty}(\mathbb R)$ of \eqref{chi}, we define $\chi_n, \bar{\chi}_n, \chi_{m, n} \in C^{\infty}(\mathbb R)$ for  $n > m \geq 0$ by
\begin{equation*}
\chi_n = \chi (x/R_n), \quad \bar{\chi}_n = 1- \chi_n, \quad \chi_{m, n} = \bar{\chi}_m\chi_n,
\end {equation*}
and let us introduce the regularized weights
\begin{equation} \label{theta2}
\Theta = \Theta^{\alpha, \delta}_{m, n, \nu} = \chi_{m, n}e^{\theta}; \quad n> m \geq 0,
\end{equation}
with exponents
$$
\theta = \theta^{\alpha, \delta}_{\nu} = 2\alpha \int^r_0 (1+s/R_{\nu})^{-1-\delta} ds; \quad \alpha \geq 0, \  \delta>0, \  \nu \geq 0.
$$
Denote their derivatives in $r$ by primes, e.g., if we set for notational simplicity
$$\theta_0 = 1+r/R_{\nu},$$
then
$$
\theta' = 2\alpha\theta_0^{-1-\delta}, \quad \theta'' = -2(1+\delta)\alpha R_{\nu}^{-1}\theta_0^{-2-\delta}, \quad \ldots.
$$
In particular, since $R_{\nu}^{-1}\theta_0^{-1} \leq r^{-1}$, we have
$$
|\theta^{(k)}| \leq C_{\delta, k}\alpha r^{1-k}\theta_0^{-1-\delta}; \quad k=1, 2, \ldots.
$$

Note that here we can use a slightly simpler exponent $\theta$ than that from \cite{is},
and, accordingly, the proofs get slightly simpler.
This is because our Hamiltonian has a repulsive property due to the potential term $-|x|^{\epsilon}$.

\subsection{A priori super-exponential decay estimates}
In this subsection we prove Proposition~\ref{prop}.
The following commutator estimate plays a major role.

\begin{lem} \label{lem}
Let $\lambda \in {\mathbb R}$, and fix any $\delta \in (0, {\rm min}\{1, \rho\})$ and $\alpha >0$.
Then there exist $c, C >0$ and $n_0 \geq 0$ such that uniformly in $n>m\geq n_0$ and $\nu \geq n_0$, as quadratic form on ${\mathcal D}(H)$,
\begin{equation} \label{keyest1}
\begin{split}
{\rm Im}\bigl[ A\Theta (H-\lambda) \bigr] &\geq cr^{\epsilon/2-1}\theta_0^{-\delta}\Theta - Cr^{\epsilon/2-1}\left(\chi^2_{m-1, m+1}+\chi^2_{n-1, n+1}\right)e^{\theta} \\
  &\quad \ + {\rm Re}\left[ \gamma (H-\lambda) \right],
\end{split}
\end{equation}
where $\gamma = \gamma_{n, m, \nu}$ is a certain function satisfying ${\rm supp}\,\gamma \subseteq {\rm supp} \,\chi_{m, n}$ and $|\gamma| \leq Ce^{\theta}$.
\end{lem}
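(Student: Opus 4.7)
The plan is to reduce the form in the statement to the explicit weighted commutator of Lemma~\ref{lem21} and then estimate term by term. A direct computation using $A = r^{-\epsilon/2}p^r - (i/2)\Delta f$ and $[p^r,\Theta(r)] = -i|dr|^2\Theta'$ gives $2\,\mathrm{Im}(A\Theta) = -|dr|^2 r^{-\epsilon/2}\Theta'$, so by \eqref{equal}
\[
2\,\mathrm{Im}[A\Theta(H-\lambda)] = [H,iA]_\Theta + \lambda\,|dr|^2 r^{-\epsilon/2}\Theta'.
\]
Splitting $H=(H-\lambda)+\lambda$ in the seventh summand $-\mathrm{Re}(|dr|^2r^{-\epsilon/2}\Theta' H)$ of \eqref{commest0}, its $\lambda$-part exactly cancels the correction above and the residual $-\mathrm{Re}(|dr|^2r^{-\epsilon/2}\Theta'(H-\lambda))$ is absorbed into $\mathrm{Re}[\gamma(H-\lambda)]$ with $\gamma = -|dr|^2r^{-\epsilon/2}\Theta'$. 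Since $|\chi_{m,n}'|$ and $|\theta'|=2\alpha\theta_0^{-1-\delta}\le 2\alpha$ are uniformly bounded while $|dr|^2 r^{-\epsilon/2}\le 1$ on $\mathrm{supp}\,\chi_{m,n}$ for $n_0\ge 1$, the bound $|\gamma|\le Ce^\theta$ and the support condition follow uniformly in $m,n,\nu$.

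The decisive positive contribution is the eighth term of \eqref{commest0}: on $\mathrm{supp}\,\chi_{m,n}$ one has $r=|x|$ and $(\nabla r)^k x_k=|x|$, so this term equals $\epsilon\, r^{\epsilon/2-1}\Theta$. The task is to dominate everything else by a fraction of this, plus the admissible boundary and $(H-\lambda)$-remainders. Each remaining quadratic-in-$p$ term (the first, third, and fourth summands of \eqref{commest0} and the $\theta'$-bulk of the second) is rewritten using the identity $\tfrac12 p_i\delta^{ij}p_j = H + |x|^\epsilon - q$: for a real scalar coefficient $a(r)\Theta$, the product $a(r)\Theta\, p_i\delta^{ij}p_j$ becomes $2a(r)\Theta(H-\lambda) + 2a(r)\Theta(|x|^\epsilon+\lambda-q)$, the first half merged into $\gamma(H-\lambda)$ and the second contributing a multiplication operator. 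For the principal kinetic term, the explicit decomposition $(\nabla^2 f)^{jk} = r^{-\epsilon/2-1}(\delta^{jk}-(1+\epsilon/2)\hat x^j\hat x^k)$ splits it into a nonnegative transverse part (discarded) and a radial part handled by the same identity; this generates a downward correction of order $r^{\epsilon/2-1}\Theta$ with coefficient strictly less than $\epsilon$, leaving a positive surplus.

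The surviving multiplicative residues --- the block $q_\Theta - \tfrac14|dr|^4r^{-\epsilon/2}\Theta'''$ and the Cauchy--Schwarz bound for $-\mathrm{Im}(2q_2r^{-\epsilon/2}\Theta p^r)$ (pairing the $p^r$-factor with the main positive form via $|p^r\psi|^2\lesssim r^\epsilon|\psi|^2+|(H-\lambda)\psi|\,|\psi|$) --- are controlled using Condition~\ref{con} and the pointwise bounds $|\theta^{(k)}|\le C_{\delta,k}\alpha\, r^{1-k}\theta_0^{-1-\delta}$. These contribute at most $Cr^{\epsilon/2-1-\min(\rho,\delta)}\theta_0^{-\delta}\Theta$, which is swallowed by the main term once $n_0$ is taken so large that $Cr^{-\min(\rho,\delta)}<\epsilon/2$ on $r\ge R_{n_0}$. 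The $\chi_{m,n}'$-parts of $\Theta'$ and $\Theta''$ are supported in $\mathrm{supp}\,\chi_{m-1,m+1}\cup\mathrm{supp}\,\chi_{n-1,n+1}$ and yield the stated boundary error after another Cauchy--Schwarz on any residual $p^r$-factor. The main obstacle is the careful bookkeeping ensuring that, after all $p^2$-rewritings and absorptions, the net coefficient of $r^{\epsilon/2-1}\Theta$ remains strictly positive and the perturbative remainders decay only by $\theta_0^{-\delta}$ (not $\theta_0^{-1-\delta}$), which is precisely what dictates the hypothesis $\delta<\min\{1,\rho\}$.
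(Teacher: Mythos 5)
Your reduction to Lemma~\ref{lem21} is fine: the identity $2\,\mathrm{Im}(A\Theta(H-\lambda))=[H,iA]_\Theta+\lambda|dr|^2r^{-\epsilon/2}\Theta'$ and the cancellation of the $\lambda$-part against the seventh term of \eqref{commest0} is exactly what happens implicitly in the paper. The genuine gap is in your positivity bookkeeping. You claim the decisive positive contribution is the eighth term $\epsilon r^{\epsilon/2-1}\Theta$, that the transverse part of $p_j(\nabla^2f)^{jk}\Theta p_k$ can be discarded, and that converting its radial part through $\tfrac12 p_i\delta^{ij}p_j=(H-\lambda)+\lambda+|x|^\epsilon-q$ costs ``strictly less than $\epsilon$''. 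It does not: after separating the transverse projector, the radial remainder is $-\tfrac{\epsilon}{2}(p^r)^*r^{-\epsilon/2-1}\Theta\,p^r$, and the only way to feed it into the energy identity is first to dominate it by the full Laplacian via $(\nabla r)^j(\nabla r)^k\leq\delta^{jk}$, i.e. $-\tfrac{\epsilon}{2}(p^r)^*r^{-\epsilon/2-1}\Theta p^r\geq-\epsilon\bigl[\tfrac12 p_jr^{-\epsilon/2-1}\Theta\delta^{jk}p_k\bigr]$, which after \eqref{formula} produces exactly $-\epsilon\,r^{\epsilon/2-1}\Theta$ (in full-commutator units), for every $\epsilon\in(0,2]$. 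This cancels the eighth term identically, leaving zero surplus; there is then no term of size $r^{\epsilon/2-1}$ left to swallow the $O(r^{\epsilon/2-1-\rho})$ and $q_2$-remainders, and no source for the asserted main term $c\,r^{\epsilon/2-1}\theta_0^{-\delta}\Theta$. In the paper this cancellation is precisely \eqref{keyest1-3} against the $+\tfrac{\epsilon}{2}r^{\epsilon/2-1}\Theta$ in \eqref{keyest1-1}, and the surviving main term comes from the piece you discard: the $\theta_0^{-\delta}$-weighted \emph{full} kinetic term $\tfrac12 p_jr^{-\epsilon/2-1}\theta_0^{-\delta}\Theta\delta^{jk}p_k$, converted in \eqref{keyest1-2} into $\mathrm{Re}\bigl(r^{-\epsilon/2-1}\theta_0^{-\delta}\Theta(H-\lambda)\bigr)+r^{\epsilon/2-1}\theta_0^{-\delta}\Theta+\dots$.

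A second, related problem: the positive radial term $(p^r)^*r^{-\epsilon/2}\theta'\Theta\,p^r$ cannot be ``rewritten using $\tfrac12 p_i\delta^{ij}p_j=H+|x|^\epsilon-q$'' from below, since the projection inequality goes the wrong way for a lower bound; you can only keep it. And it is needed: the zeroth-order $\theta'$-terms (e.g. the $\theta'^3$-part of $-\tfrac14|dr|^4r^{-\epsilon/2}\Theta'''$) are of size $\alpha^3r^{-\epsilon/2}$ near $r\sim R_\nu$, which is \emph{not} dominated by $r^{\epsilon/2-1}\theta_0^{-\delta}$ uniformly in $\nu$ when $\epsilon<1$, so your claim that all residues are $\leq Cr^{\epsilon/2-1-\min\{\rho,\delta\}}\theta_0^{-\delta}\Theta$ fails. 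The paper handles these by completing a square with the factor $\theta'-r^{-1}\theta_0^{-\delta}=(2\alpha\theta_0^{-1}-r^{-1})\theta_0^{-\delta}\geq0$ (for $n>m\geq n_0$, $\nu\geq n_0$ large, using $\alpha>0$), see \eqref{keyest1-4}; some such mechanism is missing from your argument. So the overall strategy (commutator expansion, energy identity, absorption for large $n_0$, boundary errors from $\chi_{m,n}'$) is the right one, but the proof as proposed does not close.
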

\begin{proof} 
Let $\lambda \in {\mathbb R}$ and fix any $\delta \in (0, {\rm min}\{1, \rho\})$ and $\alpha>0$.
We choose $n_0 \geq 0$ large enough so that $r(x)=|x|$ on supp\,$\Theta$.
Then we have the following formulae (cf. \eqref{f}).
\begin{equation}
\begin{split}\label{rx}
|dr|^2&=1, \qquad (\nabla^2 f)^{jk}=r^{-\epsilon/2-1}\delta^{jk}-\left( \frac{\epsilon}{2}+1 \right)r^{-\epsilon/2-1}(\nabla r)^j(\nabla r)^k, \\
(\nabla r)^j&=x^jr^{-1}, \qquad \Delta f=(d-\frac{\epsilon}{2}-1)r^{-\epsilon/2-1}, \qquad \Delta r =(d-1)r^{-1}.
\end{split}
\end{equation}
By Lemma~\ref{lem21}, \eqref{equal}, \eqref{rx} and the Cauchy-Schwarz inequality we can estimate
\begin{equation} \label{keyest1-1}
\begin{split}
  &\quad \hspace{-3mm} {\rm Im}\left( A\Theta (H-\lambda) \right) \\
  &\geq \frac{1}{2}p_jr^{-\epsilon/2-1}\Theta\delta^{jk}p_k - \frac{\epsilon+2}{4}p_j(\nabla r)^jr^{-\epsilon/2-1}\Theta(\nabla r)^kp_k + \frac{1}{2}p_j(\nabla r)^jr^{-\epsilon/2}\Theta'(\nabla r)^kp_k \\
  &\quad + \frac{\epsilon}{2}r^{\epsilon/2-1}\Theta - C_1r^{\epsilon/2-1-\rho}\Theta -\frac{4d-3\epsilon-4}{16}r^{-\epsilon/2-1}\theta'^2\Theta - \frac{1}{8}r^{-\epsilon/2}\theta'^3\Theta \\
  &\quad -\frac{3}{8}r^{-\epsilon/2}\theta'\theta''\Theta - \frac{1}{2}{\rm Re}\left( r^{-\epsilon/2}\Theta' (H-\lambda) \right) - C_1Q \\
  &\geq \frac{1}{2}p_jr^{-\epsilon/2-1}\theta_0^{-\delta}\Theta\delta^{jk}p_k - \frac{\epsilon}{4}p_j(\nabla r)^jr^{-\epsilon/2-1}\Theta(\nabla r)^kp_k \\
  &\quad + \frac{1}{2}p_j(\nabla r)^j\left( \theta' - r^{-1}\theta_0^{-\delta} \right)r^{-\epsilon/2}\Theta(\nabla r)^kp_k + \frac{\epsilon}{2}r^{\epsilon/2-1}\Theta - C_1r^{\epsilon/2-1-\rho}\Theta \\
  &\quad -\frac{4d-3\epsilon-4}{16}r^{-\epsilon/2-1}\theta'^2\Theta - \frac{1}{8}r^{-\epsilon/2}\theta'^3\Theta -\frac{3}{8}r^{-\epsilon/2}\theta'\theta''\Theta \\
  &\quad - \frac{1}{2}{\rm Re}\left( r^{-\epsilon/2}\Theta' (H-\lambda) \right) - C_1Q.
\end{split}
\end{equation}
We have introduced for simplicity
\begin{equation} \label{Q1}
\begin{split}
Q &= \left( r^{-\epsilon/2-1-{\rm min}\{1, \rho\}}\chi_{m, n} + |\chi_{m, n}'| + |\chi_{m, n}''| + |\chi_{m, n}'''| \right)e^{\theta} \\
  &\quad \ + p_i\left( r^{-\epsilon/2-1-{\rm min}\{1, \rho\}}\chi_{m, n} + r^{-\epsilon/2}|\chi_{m, n}'| \right)e^{\theta}\delta^{ij}p_j.
\end{split}
\end{equation}
Let us further compute and estimate the terms on the right-hand side of  \eqref{keyest1-1}.
Using a general identity holding for any $g \in C^{\infty}({\mathbb R}^d)$:
\begin{equation} \label{formula}
\frac{1}{2}p_ig\delta^{ij}p_j = \frac{1}{2}{\rm Re}\left( gp_i\delta^{ij}p_j \right) + \frac{1}{4}(\Delta g),
\end{equation}
we estimate the first term of \eqref{keyest1-1} by
\begin{equation} \label{keyest1-2}
\begin{split}
\frac{1}{2}p_jr^{-\epsilon/2-1}\theta_0^{-\delta}\Theta\delta^{jk}p_k
  &\geq {\rm Re}\left( r^{-\epsilon/2-1}\theta_0^{-\delta}\Theta (H-\lambda) \right) + r^{\epsilon/2-1}\theta_0^{-\delta}\Theta \\
  &\quad - C_2r^{\epsilon/2-1-\rho}\Theta + \frac{1}{4}r^{-\epsilon/2-1}\theta_0^{-\delta}\theta'^2\Theta - C_2Q.
\end{split}
\end{equation}
Similarly, we estimate the second term of \eqref{keyest1-1} by
\begin{equation} \label{keyest1-3}
\begin{split}
-\frac{\epsilon}{4}p_j(\nabla r)^jr^{-\epsilon/2-1}\Theta(\nabla r)^kp_k
  &\geq - \frac{\epsilon}{2}{\rm Re}\left( r^{-\epsilon/2-1}\Theta (H-\lambda) \right) - \frac{\epsilon}{2}r^{\epsilon/2-1}\Theta \\
  &\quad - C_3r^{\epsilon/2-1-\rho}\Theta - \frac{\epsilon}{8}r^{-\epsilon/2-1}\theta'^2\Theta - C_3Q.
\end{split}
\end{equation}
We combine the third, seventh and eighth terms of \eqref{keyest1-1} as
\begin{equation} \label{keyest1-4}
\begin{split}
  &\quad \hspace{-3mm} \frac{1}{2}p_j(\nabla r)^j\left( \theta' - r^{-1}\theta_0^{-\delta} \right)r^{-\epsilon/2}\Theta(\nabla r)^kp_k - \frac{1}{8}r^{-\epsilon/2}\theta'^3\Theta -\frac{3}{8}r^{-\epsilon/2}\theta'\theta''\Theta \\
  &= \frac{1}{2}\left( p_j(\nabla r)^j+\frac{i}{2}\theta' \right)\left( \theta'-r^{-1}\theta_0^{-\delta} \right)r^{-\epsilon/2}\Theta\left( (\nabla r)^kp_k-\frac{i}{2}\theta' \right) \\
  &\quad + \frac{i}{4}p_j(\nabla r)^j( \theta'^2-r^{-1}\theta_0^{-\delta}\theta' )r^{-\epsilon/2}\Theta - \frac{i}{4}( \theta'^2-r^{-1}\theta_0^{-\delta}\theta' )r^{-\epsilon/2}\Theta(\nabla r)^kp_k \\
  &\quad - \frac{1}{8}( \theta'^3-r^{-1}\theta_0^{-\delta}\theta'^2 )r^{-\epsilon/2}\Theta - \frac{1}{8}r^{-\epsilon/2}\theta'^3\Theta - \frac{3}{8}r^{-\epsilon/2}\theta'\theta''\Theta \\
  &\geq \frac{1}{2}\left( p_j(\nabla r)^j+\frac{i}{2}\theta' \right)\left( \theta'-r^{-1}\theta_0^{-\delta} \right)r^{-\epsilon/2}\Theta\left( (\nabla r)^kp_k-\frac{i}{2}\theta' \right) \\
  &\quad + \frac{2d-\epsilon-2}{8}r^{-\epsilon/2-1}\theta'^2\Theta - \frac{1}{8}r^{-\epsilon/2-1}\theta_0^{-\delta}\theta'^2\Theta + \frac{1}{8}r^{-\epsilon/2}\theta'\theta''\Theta - C_4Q.
\end{split}
\end{equation}
Substitute \eqref{keyest1-2}, \eqref{keyest1-3} and \eqref{keyest1-4} into \eqref{keyest1-1}, and then it follows that
\begin{equation} \label{keyest1-5}
\begin{split}
  &\quad \hspace{-3mm} {\rm Im}\left( A\Theta (H-\lambda) \right) \\
  &\geq r^{\epsilon/2-1}\theta_0^{-\delta}\Theta - C_5r^{\epsilon/2-1-\rho}\Theta - \frac{\epsilon}{16}r^{-\epsilon/2-1}\theta'^2\Theta + \frac{1}{8}r^{-\epsilon/2}\theta_0^{-\delta}\theta'^2\Theta + \frac{1}{8}r^{-\epsilon/2}\theta'\theta''\Theta \\
  &\quad + \frac{1}{2}\left( p_j(\nabla r)^j+\frac{i}{2}\theta' \right)\left( \theta'-r^{-1}\theta_0^{-\delta} \right)r^{-\epsilon/2}\Theta\left( (\nabla r)^kp_k-\frac{i}{2}\theta' \right) \\
  &\quad + {\rm Re}\left( \left\{r^{-\epsilon/2-1}\theta_0^{-\delta}\Theta -\frac{\epsilon}{2}r^{-\epsilon/2-1}\Theta -\frac{1}{2}r^{-\epsilon/2}\Theta' \right\}(H-\lambda) \right) - C_5Q.
\end{split}
\end{equation}
Using the formula \eqref{formula} we rewrite and bound the remainder operator \eqref{Q1} as
\begin{equation} \label{Q2}
\begin{split}
Q &\leq C_6r^{-\epsilon/2-1-{\rm min}\{1, \rho\}}\Theta + C_6r^{\epsilon/2-1-{\rm min}\{1, \rho\}}\Theta \\
  &\quad \ + C_6r^{\epsilon/2-1}\left(\chi_{m-1, m+1}^2 + \chi_{n-1, n+1}^2\right)e^{\theta} \\
  &\quad \ + 2{\rm Re}\left( \left\{ r^{-\epsilon/2-1-{\rm min}\{1, \rho\}}\Theta + r^{-\epsilon/2}|\chi_{m, n}'|e^{\theta} \right\}(H-\lambda) \right).
\end{split}
\end{equation}
Hence we obtain by \eqref{keyest1-5} and \eqref{Q2}
\begin{equation} \label{uniest}
\begin{split}
  &\quad \hspace{-3mm} {\rm Im}\left( A\Theta (H-\lambda) \right) \\
  &\geq \left( r^{\epsilon/2-1}\theta_0^{-\delta} - C_5r^{\epsilon/2-1-\rho} - C_7r^{-\epsilon/2-1-{\rm min}\{1, \rho\}} + C_7r^{\epsilon/2-1-{\rm min}\{1, \rho\}} \right. \\
  &\quad \left. \ \ + \frac{1}{8}r^{-\epsilon/2}\theta'\theta'' - \frac{\epsilon}{16}r^{-\epsilon/2-1}\theta'^2 + \frac{1}{8}r^{-\epsilon/2}\theta_0^{-\delta}\theta'^2 \right)\Theta \\
  &\quad \ - C_7r^{\epsilon/2-1}\left(\chi_{m-1, m+1}^2 + \chi_{n-1, n+1}^2\right)e^{\theta} + {\rm Re}\left( \gamma(H-\lambda) \right) \\
  &\quad \ + \frac{1}{2}\left( p_j(\nabla r)^j+\frac{i}{2}\theta' \right)\left( \theta'-r^{-1}\theta_0^{-\delta} \right)r^{-\epsilon/2}\Theta\left( (\nabla r)^kp_k-\frac{i}{2}\theta' \right),
\end{split}
\end{equation}
where
$$
\gamma = r^{-\epsilon/2-1}\theta_0^{-\delta}\Theta - \frac{\epsilon}{2}r^{-\epsilon/2-1}\Theta - \frac{1}{2}r^{-1}\Theta' - 2C_5r^{-\epsilon/2-1-{\rm min}\{1, \rho\}}\Theta - 2C_5r^{-\epsilon/2}|\chi_{m, n}'|e^{\theta}.
$$

Now we further restrict parameters.
If we choose sufficiently large $n_0 \geq 0$, the first term is bounded below uniformly in $n>m \geq n_0$ and $\nu \geq 0$ as
\begin{align*}
  &\quad \hspace{-3mm} \left( r^{\epsilon/2-1}\theta_0^{-\delta} - C_5r^{\epsilon/2-1-\rho} - C_7r^{-\epsilon/2-1-{\rm min}\{1, \rho\}} + C_7r^{\epsilon/2-1-{\rm min}\{1, \rho\}} \right. \\
  &\quad \left. \ + \frac{1}{8}r^{-\epsilon/2}\theta'\theta'' - \frac{\epsilon}{16}r^{-\epsilon/2-1}\theta'^2 + \frac{1}{8}r^{-\epsilon/2}\theta_0^{-\delta}\theta'^2 \right)\Theta \\
  &\geq cr^{\epsilon/2-1}\theta_0^{-\delta}\Theta.
\end{align*}
Since
$$\theta' - r^{-1}\theta_0^{-\delta} = \left( 2\alpha \theta_0^{-1} - r^{-1} \right) \theta_0^{-\delta},$$
by retaking $n_0 \geq 0$ larger, if necessary, the fourth term is non-negative for any $n>m\geq n_0$ and $\nu \geq n_0$.
Hence the desired estimate follows.
\end{proof}

\begin{proof}[Proof of Proposition~\ref{prop}]
Let $\lambda \in {\mathbb R}$ and $\phi \in {\mathcal B}_0^*$ be as in the assertion, and fix any $\delta \in (0, {\rm min}\{1, \rho\})$, $\alpha >0$ and $n_0 \geq 0$ in agreement with Lemma~\ref{lem}.
For any function $\phi$ obeying the assumptions of Proposition~\ref{prop} we have $\chi_{m, n}\phi \in H^2_{\rm comp} \subset {\mathcal D}(H)$ for all $n>m \geq 0$.
Note that we may assume $n_0 \geq 3$, so that for all $n > m \geq n_0$
$$
\chi_{m-2, n+2}\phi \in {\mathcal D}(H).
$$
We evaluate the inequality \eqref{keyest1} in the state $\chi_{m-2, n+2}\phi \in {\mathcal D}(H)$, and then obtain for any $n > m \geq n_0$ and $\nu \geq n_0$
\begin{equation} \label{n-infty}
\| ( r^{\epsilon/2-1}\theta_0^{-\delta}\Theta )^{1/2}\phi \|^2 \leq C_m\| \chi_{m-1, m+1}\phi \|^2 + C_{\nu}R_n^{\epsilon/2-1}\| \chi_{n-1, n+1}\phi \|^2.
\end{equation}
The second term on the right-hand side of \eqref{n-infty} vanishes when $n \to \infty$ since $\phi \in {\mathcal B}_0^*$, and consequently by Lebesgue's monotone convergence theorem we have
\begin{equation} \label{nu-infty}
\| (\bar{\chi}_m r^{\epsilon/2-1}\theta_0^{-\delta}e^{\theta})^{1/2}\phi \|^2 \leq C_m\| \chi_{m-1, m+1}\phi \|^2.
\end{equation}
Next we let $\nu \to \infty$ in \eqref{nu-infty} invoking again Lebesgue's monotone convergence theorem, and then it follows that
$$
r^{\epsilon/4-1/2}e^{\alpha r}\phi \in {\mathcal H}.
$$
Consequently this implies $e^{\alpha r}\phi \in {\mathcal B}_0^*$ for any $\alpha \geq 0$.
Hence we are done.
\end{proof}

\subsection{Absence of super-exponentially decaying eigenfunctions}
In this subsection we prove Proposition~\ref{prop2}.

\begin{lem}
Let $\lambda \in {\mathbb R}$ and $\alpha_0 > 0$, and set $\Theta = \chi_{m, n}e^{2\alpha r}$.
Then there exist $c, C > 0$ and $n_0 \geq 0$ such that uniformly in $\alpha > \alpha_0$ and $n > m \geq n_0$, as quadratic forms on ${\mathcal D}(H)$,
\begin{equation} \label{keyest2}
\begin{split}
{\rm Im}\left( A\Theta (H-\lambda) \right)
  &\geq c\alpha^2r^{-\epsilon/2-1}\Theta - C\alpha^2r^{\epsilon/2-1}\left(\chi^2_{m-1, m+1}+\chi^2_{n-1, n+1}\right)e^{2\alpha r} \\
  &\quad + {\rm Re}\left( \gamma (H-\lambda) \right),
\end{split}
\end{equation}
where $\gamma = \gamma_{n, m,}$ is a certain function satisfying ${\rm supp}\,\gamma \subseteq {\rm supp} \,\chi_{m, n}$ and $|\gamma| \leq C\alpha e^{2\alpha r}$.
\end{lem}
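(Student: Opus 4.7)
The plan is to follow the same template as the proof of Lemma~\ref{lem}, now specialized to the weight $\Theta = \chi_{m,n}e^{2\alpha r}$, i.e.\ with $\theta = 2\alpha r$. This choice gives the drastic simplifications $\theta' = 2\alpha$ (constant) and $\theta^{(k)} = 0$ for every $k \geq 2$, so all $\theta''$, $\theta'''$ contributions in Lemma~\ref{lem21} vanish outright. First I would fix $n_0$ large enough that $r(x) = |x|$ on $\mathrm{supp}\,\Theta$, so that the Euclidean formulae \eqref{rx} are available, and insert this $\Theta$ into the commutator identity of Lemma~\ref{lem21} combined with \eqref{equal}.

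The next step reproduces the algebraic manipulations \eqref{keyest1-2}--\eqref{keyest1-4}: apply the identity \eqref{formula} to rewrite each kinetic term $\tfrac{1}{2}p_j g \delta^{jk}p_k$ as $\mathrm{Re}(g(H-\lambda))$ plus pointwise coefficients, and complete the square by grouping
\[
\tfrac{1}{2}\bigl(p_j(\nabla r)^j + i\alpha\bigr)\,\theta'\, r^{-\epsilon/2}\Theta\,\bigl((\nabla r)^k p_k - i\alpha\bigr) \geq 0.
\]
All operator-valued coefficients multiplying $H-\lambda$ are absorbed into the remainder $\mathrm{Re}(\gamma(H-\lambda))$; since $\Theta' = 2\alpha\Theta + \chi_{m,n}'e^{2\alpha r}$, the accumulated function $\gamma$ is supported in $\mathrm{supp}\,\chi_{m,n}$ and satisfies $|\gamma| \leq C\alpha e^{2\alpha r}$, as required.

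At the level of pointwise coefficients, the dominant positive $\alpha^2$-term is $\tfrac{1}{8}r^{-\epsilon/2}\theta'^2\Theta = \tfrac{\alpha^2}{2}r^{-\epsilon/2}\Theta$ produced as in \eqref{keyest1-4}, whereas the only negative $\alpha^2$-term is $-\tfrac{\epsilon}{16}r^{-\epsilon/2-1}\theta'^2\Theta$; for large $r$ the former clearly dominates, leaving at least $c\alpha^2 r^{-\epsilon/2-1}\Theta$. Lower-order-in-$\alpha$ terms arising from $q_1, q_2$ via Condition~\ref{con}, from $\Delta r$, $\Delta f$, and from $\chi_{m,n}$-derivatives are either negligible for large $r$ or supported in the two thin annuli around $R_m, R_n$. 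Collecting everything yields the principal bound $c\alpha^2 r^{-\epsilon/2-1}\Theta$ uniformly in $\alpha > \alpha_0$ and $n > m \geq n_0$, with the boundary remainders absorbed into $-C\alpha^2 r^{\epsilon/2-1}(\chi_{m-1,m+1}^2 + \chi_{n-1,n+1}^2)e^{2\alpha r}$.

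The main obstacle is the bookkeeping: ensuring every term produced by the commutator expansion carries the correct power of $\alpha$ and of $r$, and that subleading contributions are dominated by the principal $\alpha^2 r^{-\epsilon/2-1}\Theta$ bound (using $n_0$ large and $\alpha > \alpha_0$), absorbed into $\mathrm{Re}(\gamma(H-\lambda))$, or localised to the boundary annuli. Structurally the argument is easier than Lemma~\ref{lem}, because the absence of the cutoff factor $\theta_0^{-\delta}$ and the vanishing of $\theta''$, $\theta'''$ remove many of the cross terms that had to be handled there.
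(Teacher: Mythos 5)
Your overall strategy (specialize the weighted commutator expansion of Lemma~\ref{lem21} to $\theta=2\alpha r$, complete a square in the radial direction, push all $(H-\lambda)$-coefficients into $\gamma$ and the cutoff derivatives into boundary annuli) is indeed the paper's strategy, but the decisive piece of bookkeeping --- which positive $\alpha^2$-term survives --- is wrong in your sketch, and this is exactly the point where the lemma could fail. Your claimed dominant term $\tfrac18 r^{-\epsilon/2}\theta'^2\Theta=\tfrac{\alpha^2}{2}r^{-\epsilon/2}\Theta$ is read off from \eqref{keyest1-5}/\eqref{uniest}, but the exponent there is a typo: tracing it back, it is the sum of $+\tfrac14 r^{-\epsilon/2-1}\theta_0^{-\delta}\theta'^2\Theta$ from \eqref{keyest1-2} and $-\tfrac18 r^{-\epsilon/2-1}\theta_0^{-\delta}\theta'^2\Theta$ from \eqref{keyest1-4}, i.e.\ it carries the weight $r^{-\epsilon/2-1}$, the same power of $r$ as the negative term $-\tfrac{\epsilon}{16}r^{-\epsilon/2-1}\theta'^2\Theta$. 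Hence your step ``for large $r$ the former clearly dominates'' is not available: with the corrected weight the net coefficient in the arrangement you propose (which, following \eqref{keyest1-2}--\eqref{keyest1-4}, sacrifices $\tfrac{\epsilon}{8}r^{-\epsilon/2-1}\theta'^2\Theta$ through the Cauchy--Schwarz step \eqref{keyest1-3}) is $\tfrac{2-\epsilon}{16}\theta'^2 r^{-\epsilon/2-1}$, which is strictly positive only for $\epsilon<2$ and vanishes identically at $\epsilon=2$, a value the theorem must cover. Note also that you cannot rescue this by ``taking $r$ large'' against any leftover negative $\alpha^2$- or $\alpha^3$-term, since the bound must be uniform in $\alpha>\alpha_0$ (the whole point is the limit $\alpha\to\infty$ in Proposition~\ref{prop2}); the cancellations at order $\alpha^3 r^{-\epsilon/2}$ and $\alpha^2 r^{-\epsilon/2}$ must be exact, not absorbed.

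The paper's proof avoids the $\epsilon=2$ degeneracy by arranging the radial terms differently from Lemma~\ref{lem}: it keeps the full non-radial kinetic term $\tfrac12 p_j r^{-\epsilon/2-1}\Theta\delta^{jk}p_k$ intact (its \eqref{formula}-rewriting supplies $+\alpha^2 r^{-\epsilon/2-1}\Theta$ and the $r^{\epsilon/2-1}\Theta$ potential term) and folds the \emph{entire} coefficient $-\tfrac{\epsilon+2}{2}r^{-1}$ into the completed square, whose weight is $\bigl(2\alpha-\tfrac{\epsilon+2}{2}r^{-1}\bigr)r^{-\epsilon/2}\Theta$ rather than your $2\alpha\,r^{-\epsilon/2}\Theta$; no \eqref{keyest1-3}-type loss of $\tfrac{\epsilon}{8}\theta'^2$ occurs, and the net surviving term is $+\tfrac12\alpha^2 r^{-\epsilon/2-1}\Theta$ for every $\epsilon\in(0,2]$. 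This choice of square is also where the hypotheses enter quantitatively: its nonnegativity requires $2\alpha-\tfrac{\epsilon+2}{2}r^{-1}\ge 0$, i.e.\ $n_0$ large depending on $\alpha_0$, which is precisely the role of the assumption $\alpha>\alpha_0$ (your square with weight $2\alpha r^{-\epsilon/2}\Theta$ is trivially nonnegative, so your sketch never explains why $\alpha_0$ is needed). Finally, the uniformity in $\alpha$ that you list as ``bookkeeping'' is where the stated form of the remainder comes from: one must track that the analogue of $Q$ carries factors $(1+\alpha^2)$, that the boundary term is $C\alpha^2 r^{\epsilon/2-1}(\chi^2_{m-1,m+1}+\chi^2_{n-1,n+1})e^{2\alpha r}$, and that $|\gamma|\le C\alpha e^{2\alpha r}$; none of this is automatic from Lemma~\ref{lem}, whose constants were allowed to depend on the then-fixed $\alpha$. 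As it stands, the proposal therefore has a genuine gap at the leading-order cancellation (fatal for $\epsilon=2$), traceable to relying on the misprinted weight in \eqref{keyest1-5}.
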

\begin{proof}
Fix any $\lambda \in {\mathbb R}$ and $\delta \in (0, {\rm min}\{1, \rho\})$.
Choose $n_0 \geq 0$ large enough.
Then, as with the arguments of the proof of Lemma~\ref{lem}, we can estimate uniformly in $\alpha \geq 0$ and $n > m \geq n_0$ as
\begin{align*}
  &\quad \hspace{-3mm} {\rm Im}\left( A\Theta(H-\lambda) \right) \\
  &\geq \frac{1}{2}p_jr^{-\epsilon/2-1}\Theta\delta^{jk}p_k + \frac{1}{2}p_j(\nabla r)^j\left( 2\alpha-\frac{\epsilon+2}{2}r^{-1} \right)r^{-\epsilon/2}\Theta(\nabla r)^kp_k - C_1r^{\epsilon/2-1-\rho}\Theta \\
  &\quad + \frac{\epsilon}{2}r^{\epsilon/2-1}\Theta - \frac{4d-3\epsilon-4}{4}\alpha^2r^{-\epsilon/2-1}\Theta - \alpha^3r^{-\epsilon/2}\Theta - \frac{1}{2}{\rm Re}\left( r^{-\epsilon/2}\Theta' (H-\lambda) \right) - C_1Q \\
  &\geq \left( \frac{\epsilon}{2}+1 \right)r^{\epsilon/2-1}\Theta - C_2r^{-\epsilon/2-1-\rho}\Theta + \frac{1}{2}\alpha^2r^{-\epsilon/2-1}\Theta \\
  &\quad + \frac{1}{2}\left( p_j(\nabla r)^j+i\alpha \right)\left(2\alpha-\frac{\epsilon+2}{2}r^{-1}\right)r^{-\epsilon/2}\Theta\left( (\nabla r)^kp_k-i\alpha \right) \\
  &\quad + {\rm Re}\left( \left( r^{-\epsilon/2-1}\Theta - \frac{1}{2}r^{-\epsilon/2}\Theta' \right)(H-\lambda) \right) - C_2Q \\
  &\geq \left( \frac{\epsilon}{2}+1 \right)r^{\epsilon/2-1}\Theta - C_3r^{\epsilon/2-1-\rho}\Theta - C_3r^{\epsilon/2-1-{\rm min}\{1, \rho\}}\Theta \\
  &\quad + \frac{1}{2}\alpha^2r^{-\epsilon/2-1}\Theta - C_3(1+\alpha^2)r^{-\epsilon/2-1-{\rm min}\{1, \rho\}}\Theta \\
  &\quad + \frac{1}{2}\left( p_j(\nabla r)^j+i\alpha \right)\left(2\alpha-\frac{\epsilon+2}{2}r^{-1}\right)r^{-\epsilon/2}\Theta\left( (\nabla r)^kp_k-i\alpha \right) \\
  &\quad - C_3(1+\alpha^2)r^{\epsilon/2-1}\left(\chi^2_{m-1, m+1}+\chi^2_{n-1, n+1}\right)e^{2\alpha r} + {\rm Re}\left( \gamma (H-\lambda) \right),
\end{align*}
where
\begin{align*}
Q &= \left( (1+\alpha^2)r^{-\epsilon/2-1-{\rm min}\{1, \rho\}}\chi_{m, n} + (1+\alpha^2)|\chi_{m, n}'| + (1+\alpha)|\chi_{m, n}''| + |\chi_{m, n}'''| \right)e^{2\alpha r} \\
  &\quad \quad +p_i\left( r^{-\epsilon/2-1-{\rm min}\{1, \rho\}}\chi_{m, n} + r^{-\epsilon/2}|\chi_{m, n}'| \right)e^{2\alpha r}\delta^{ij}p_j, \\
\gamma &= r^{-\epsilon/2-1}\Theta - \frac{1}{2}r^{-\epsilon/2}\Theta' - 2C_2r^{-\epsilon/2-1-{\rm min}\{1, \rho\}}\Theta - 2C_2r^{-\epsilon/2}|\chi_{m, n}'|e^{2\alpha r}.
\end{align*}
There we fix any $\alpha_0 > 0$ and choose sufficiently large $n_0 \geq 0$.
Consequently we can easily verify the asserted inequality \eqref{keyest2} uniformly in $\alpha > \alpha_0$ and $n > m \geq n_0$.
Hence we are done.
\end{proof}

\vspace{3mm}
\noindent
{\it Proof of Proposition~\ref{prop2}.}
Let $\lambda \in {\mathbb R}$ and $\phi \in {\mathcal B}_0^*$ be as in the assertion.
Fix any $\alpha_0 > 0$, and choose $n_0 \geq 0$ in agreement with Lemma 3.4.
We may assume that $n_0 \geq 3$, so that for all $n > m \geq n_0$
$$\chi_{m-2, n+2}\phi \in {\mathcal D}(H).$$
Let us evaluate the inequality \eqref{keyest2} in the state $\chi_{m-2, n+2}\phi \in {\mathcal D}(H)$.
Then it follows that for any $\alpha > \alpha_0$ and $n > m \geq n_0$
\begin{equation} \label{n-infty2}
\| r^{-\epsilon/4-1/2}\Theta^{1/2}\phi \|^2 \leq C_m\| \chi_{m-1, m+1}e^{\alpha r}\phi \|^2 + C_1R_n^{\epsilon/2-1}\| \chi_{n-1, n+1}e^{\alpha r} \phi \|^2.
\end{equation}
The second term on the right-hand side of \eqref{n-infty2} vanishes when $n \to \infty$, and hence by Lebesgue's monotone convergence theorem we obtain
$$
\| \bar{\chi}_m^{1/2}r^{-\epsilon/4-1/2}e^{\alpha r}\phi \|^2 \leq C_m\| \chi_{m-1, m+1}e^{\alpha r}\phi \|^2,
$$
or
\begin{equation} \label{nu-infty2}
\| \bar{\chi}_m^{1/2}r^{-\epsilon/4-1/2}e^{\alpha(r-R_{m+2})}\phi \|^2 \leq C_m\| \chi_{m-1, m+1}\phi \|^2.
\end{equation}

Now assume $\bar{\chi}_{m+2}\phi \not\equiv 0$.
The left-hand side of \eqref{nu-infty2} grows exponentially as $\alpha \to \infty$ whereas the right-hand side remains bounded.
This is a contradiction.
Thus $\bar{\chi}_{m+2}\phi \equiv 0$.
By invoking the unique continuation property for the second order elliptic operator $H$ (cf. \cite{wo}) we conclude that $\phi \equiv 0$ globally on ${\mathbb R}^d$.
\hspace{\fill} $\Box$

\end{document}